\documentclass[12pt]{article} 
\usepackage{amsfonts,amsmath,latexsym,amssymb,mathrsfs,amsthm}
\usepackage{slashbox}
\usepackage{caption}

\evensidemargin0cm
\oddsidemargin0cm
\textwidth16cm
\textheight22.8cm
\topmargin-1.7cm  



\let\OLDthebibliography\thebibliography
\renewcommand\thebibliography[1]{
  \OLDthebibliography{#1}
  \setlength{\parskip}{3pt}
  \setlength{\itemsep}{0pt plus 0.3ex}
}


%

\def\numberlikeadb{\global\def\theequation{\thesection.\arabic{equation}}}
\numberlikeadb
\newtheorem{theorem}{Theorem}[section]
\newtheorem{lemma}[theorem]{Lemma}

\newtheorem{remark}[theorem]{Remark}

\usepackage{lscape}
\usepackage{caption}
\usepackage{multirow}
\begin{document}

\title{Bounds for an integral of the modified Bessel function of the first kind and expressions involving it}
\author{Robert E. Gaunt\footnote{Department of Mathematics, The University of Manchester, Oxford Road, Manchester M13 9PL, UK}}  





\date{\today} 
\maketitle

\vspace{-5mm}

\begin{abstract}Simple upper and lower bounds are obtained for the integral $\int_0^x\mathrm{e}^{-\gamma t}t^\nu I_\nu(t)\,\mathrm{d}t$, $x>0$, $\nu>-\frac{1}{2}$, $0<\gamma<1$.  Most of our bounds for this integral are tight as $x\rightarrow\infty$.  We apply one of our inequalities to bound some expressions involving this integral. Two of these expressions appear in Stein's method for variance-gamma approximation, and our bounds will allow for a technical advancement to be made to the method. 
\end{abstract}

\noindent{{\bf{Keywords:}}} Modified Bessel function; inequality; integral

\noindent{{{\bf{AMS 2010 Subject Classification:}}} Primary 33C10; 26D15

\section{Introduction}\label{intro}

\subsection{Motivation through Stein's method for variance-gamma approximation}

Stein's method \cite{stein} is a powerful probabilistic technique for deriving bounds for distributional approximations with respect to a probability metric. It has found applications throughout the mathematical sciences in areas as diverse as queuing theory \cite{bj1},  number theory \cite{harper} and branching processes \cite{pekoz1}.  The method is particularly well developed for normal and Poisson approximation (see the books \cite{bhj92,chen, np12}), and there is active research into extensions to other distributional limits; see the survey \cite{ross}.

Recently, Stein's method has been extended to variance-gamma (VG) approximation \cite{eichelsbacher, gaunt vg, gaunt vg2}.  Applications have included VG approximation for a special case of the $D_2$ statistic from alignment-free sequence comparison \cite{bla, lippert}; quantitative six moment theorems for the VG approximation of double Wiener-It\^{o} integrals; and Laplace approximation of a random sum of independent mean zero random variables (see \cite{pike} for related results).  The VG distribution is commonly used in financial mathematics \cite{mcc98,madan}, and has a rich distributional theory, with special or limiting cases including the normal, gamma and Laplace distributions, and the product of two zero mean normals and difference of two gammas (see\cite{gaunt vg} and Chapter 4 of the book \cite{kkp01}, in which the distribution is called the generalized Laplace distribution). The VG distribution has also recently appeared in several other papers in the probability literature as a limiting distribution \cite{aaps17,ag18, azmooden, bt17}.   Extending Stein's method to the VG distribution is of interest because it puts some of the Stein's method literature into a more general framework and widens the scope of the method to treat new distributional limits.

Fundamental to Stein's method for VG approximation is the function $f_h:\mathbb{R}\rightarrow\mathbb{R}$ defined by
\begin{align}\label{vgsoln}f_h(x)&=-\frac{\mathrm{e}^{-\beta x} K_{\nu}(|x|)}{|x|^{\nu}} \int_0^x \mathrm{e}^{\beta t} |t|^{\nu} I_{\nu}(|t|) h(t) \,\mathrm{d}t  -\frac{\mathrm{e}^{-\beta x} I_{\nu}(|x|)}{|x|^{\nu}} \int_x^{\infty} \mathrm{e}^{\beta t} |t|^{\nu} K_{\nu}(|t|)h(t)\,\mathrm{d}t, 
\end{align}
where $x\in\mathbb{R}$, $\nu>-\frac{1}{2}$, $-1<\beta<1$, and $h:\mathbb{R}\rightarrow\mathbb{R}$ satisfies $\mu(h)=0$, for $\mu$ the VG probability measure. Here, $I_\nu(x)$ and $K_\nu(x)$ are modified Bessel functions of the first and second kind; basic properties of these functions that are needed in this paper are collected in Appendix \ref{appa}. In order to apply Stein's method for VG approximation, one must obtain uniform bounds, in terms of the supremum norms of $h$ and its derivatives,  for $f_h$ and certain lower order derivatives. New inequalities were obtained for the integrals given in (\ref{vgsoln}) by \cite{gaunt ineq1, gaunt ineq3} and applied in \cite{gaunt vg, dgv15} to obtain uniform bounds for derivatives of $f_h$ of arbitrary order, given sufficiently regular $h$. These bounds allow for VG approximations to be obtained by Stein's method in certain weak test function metrics, which imply convergence in distribution.

In order to obtain distributional approximations in the stronger and more widely used Kolmogorov and Wasserstein metrics, different types of bounds for $f_h$ and its derivatives are required than those given by \cite{gaunt vg, dgv15}. This was recently achieved by \cite{gaunt vg2} for a special case of the VG distribution, the symmetric VG distribution, that corresponds to setting $\beta=0$ in (\ref{vgsoln}).  The work of \cite{gaunt vg2} relied on new bounds of \cite{gaunt ineq3} for integrals of a similar form to those in (\ref{vgsoln}), as well as uniform bounds for some expressions involving these integrals. Uniform bounds were also obtained by \cite{gaunt ineq3} for a number of other expressions involving integrals of modified Bessel functions that correspond to the general $-1<\beta<1$ case for VG approximation. In particular, the following uniform bounds were established.  Suppose that $-1<\beta<0$ and $\nu\geq\frac{1}{2}$.  Then, for all $x\geq0$,
\begin{eqnarray}\label{term1}\frac{\mathrm{e}^{-\beta x}K_{\nu+1}(x)}{x^{\nu-1}}\int_0^x \mathrm{e}^{\beta t}t^{\nu}I_\nu(t)\,\mathrm{d}t&<& \frac{\nu+1}{(2\nu+1)(1+\beta)}, \\
\label{term2}\frac{\mathrm{e}^{-\beta x}K_{\nu}(x)}{x^{\nu-1}}\int_0^x \mathrm{e}^{\beta t}t^{\nu}I_\nu(t)\,\mathrm{d}t&<& \frac{\nu+1}{(2\nu+1)(1+\beta)}.
\end{eqnarray}
Uniform bounds for the case $0\leq\beta<1$ and $\nu>-\frac{1}{2}$ are easier to obtain and were also derived by \cite{gaunt ineq3}.  However, the case $-1<\beta<0$, $-\frac{1}{2}<\nu<\frac{1}{2}$ proved more challenging and \cite{gaunt ineq3} was unable to obtain uniform bounds in this parameter regime. This was left as an open problem, which, once solved, would allow for the uniform bounds of \cite{gaunt vg2} for $f_h$ to be extended from the $\beta=0$ case to the general $-1<\beta<1$ case. This would constitute a technical advancement that would mean that Stein's method for VG approximation could now be used to obtain Kolomogorov and Wasserstein distance bounds for the whole class of VG distributions. 

In this paper, we are able to solve the open problem and establish the desired uniform bounds for (\ref{term1}) and (\ref{term2}) in the remaining parameter regime of  $-1<\beta<0$, $-\frac{1}{2}<\nu<\frac{1}{2}$. Our results will be used in the forthcoming paper \cite{gaunt vg3} that will make the aforementioned technical advances to Stein's method for VG approximation that will allow explicit error bounds for VG approximations to be derived in the Kolmogorov and Wasserstein metrics.

\subsection{Summary of the paper}

Our approach to bounding the expressions (\ref{term1}) and (\ref{term2}) is to first obtain suitable bounds for the integral present in these terms, 
\begin{equation}\label{besint0}\int_0^x\mathrm{e}^{-\gamma t}t^\nu I_\nu(t)\,\mathrm{d}t, \quad x>0,\;\nu>-\tfrac{1}{2}, \;0<\gamma<1.
\end{equation}
Here $\gamma=-\beta$. A closed-form formula in terms of the modified Bessel function $I_\nu(x)$ and the modified Struve function $\mathbf{L}_{\nu}(x)$ does in fact exist in the case $\gamma=0$ \cite[formula 10.43.2]{olver}
\begin{equation}\label{besint6}\int x^{\nu}I_{\nu}(x)\,\mathrm{d}x =\sqrt{\pi}2^{\nu-1}\Gamma(\nu+\tfrac{1}{2})x\big(I_{\nu}(x)\mathbf{L}_{\nu-1}(x)-I_{\nu-1}(x)\mathbf{L}_{\nu}(x)\big), 
\end{equation}
and another closed-form formula is available for the case $\gamma=1$; see formula (\ref{intfor}) in Appendix \ref{appa}. However, there are no closed-form formulas involving modified Bessel and modified Struve functions for $0<\gamma<1$.  Moreover, the right-hand side of (\ref{besint6}) takes a fairly complicated form that is not suitable for bounding the expressions (\ref{term1}) and (\ref{term2}). This provides our motivation for establishing simple bounds, in terms of the modified Bessel function $I_\nu(x)$, for the integral (\ref{besint0}).

Some simple bounds for the integral (\ref{besint0}), involving the modified Bessel function of the first kind, have been been obtained in the recent papers \cite{gaunt ineq1,gaunt ineq3}. In Section \ref{sec2}, we establish a new upper bound for (\ref{besint0}) that holds in the restricted region $x\geq x_*$, for $x_*>\frac{1}{1-\gamma}$, but is crucially of the correct asymptotic order as $x\rightarrow\infty$, and is valid for all $\nu>-\frac{1}{2}$, $0<\gamma<1$ (the bounds of \cite{gaunt ineq1,gaunt ineq3} are only valid for $\nu\geq\frac{1}{2}$, $0<\gamma<1$). These features of the bound are precisely what we need in order to bound the expressions (\ref{term1}) and (\ref{term2}).
 We also obtain two other upper bounds for (\ref{besint0}), inequalities (\ref{ineqb10}) and (\ref{ineqb11}), which are the first upper bounds in the literature that are valid for all $x>0$, $\nu>-\frac{1}{2}$, $0<\gamma<1$.  We shall complement our upper bounds with several lower bounds for the integral. All of our lower bounds are tight as $x\rightarrow\infty$, and one of our lower bounds improves on the only other lower bound in the literature for (\ref{besint0}) (due to \cite{gaunt ineq3}). Our upper bound (\ref{ineqb1}) will have an immediate application to Stein's method for VG approximation. Due to the combination of the simple form and accuracy of our bounds, they may also prove useful in other problems involving modified Bessel functions; see, for example, \cite{bs09,baricz3} which uses inequalities for the modified Bessel function $I_\nu(x)$ to derive tight bounds for the generalized Marcum Q-function, which arises in radar signal processing.

In Section \ref{sec3}, we apply our upper bound (\ref{ineqb1}) for (\ref{besint0}), together with known inequalities for products of modified Bessel functions, to obtain uniform upper bounds for the expressions (\ref{term1}) and (\ref{term2}) in the parameter regime $\nu>-\frac{1}{2}$, $-1<\beta<0$. We also obtain a uniform upper bound for a related expression, which allows us, as a consequence, to prove our upper bound (\ref{ineqb10}) for the integral (\ref{besint0}), which is valid for all $x>0$, $\nu>-\frac{1}{2}$, $0<\gamma<1$.  We complement our upper bounds with lower bounds for the supremum over all $x\geq0$ for these expressions, which give useful insight into the accuracy of our upper bounds.

\section{Bounds for the integral}\label{sec2}

In the following Theorems \ref{tiger1} and \ref{tiger2}, we obtain new inequalities for the integral (\ref{besint0}).  These inequalities complement the inequalities of Theorem 2.1 of \cite{gaunt ineq1} and Theorem 2.3 of \cite{gaunt ineq3} for this integral, together with the inequalities of \cite{gaunt ineq6} for the related integral $\int_0^x \mathrm{e}^{-\gamma t}t^{-\nu}I_\nu(t)\,\mathrm{d}t$.

\begin{theorem}\label{tiger1}Let $0<\gamma<1$. Fix $x_*>\frac{1}{1-\gamma}$.  Then, for $x\geq x_*$,
\begin{equation}\label{ineqb1}\int_0^x \mathrm{e}^{-\gamma t}t^{\nu}I_\nu(t)\,\mathrm{d}t<M_{\nu,\gamma}(x_*)\mathrm{e}^{-\gamma x} x^\nu I_{\nu+1}(x), \quad \nu>-\tfrac{1}{2},
\end{equation}
where
\begin{equation}\label{mng}M_{\nu,\gamma}(x_*)=\max\bigg\{\frac{2(\nu+1+x_*)}{2\nu+1},\frac{x_*}{(1-\gamma)x_*-1}\bigg\}.
\end{equation}
Also, for $x>0$,
\begin{align}\label{ineqb2}\int_0^x \mathrm{e}^{-\gamma t}t^{\nu}I_\nu(t)\,\mathrm{d}t&>\frac{1}{1-\gamma}\bigg\{\mathrm{e}^{-\gamma x}x^\nu\bigg(I_\nu(x)-\frac{x^\nu}{\Gamma(\nu+1)2^\nu}\bigg)-\frac{\gamma(2\nu+1,\gamma x)}{\Gamma(\nu+1)2^\nu\gamma^{2\nu}}\bigg\}, \\
&\quad\quad\quad\quad\quad\quad\quad\quad\quad\quad\quad\quad\quad\quad\quad\quad\quad\quad\quad-\tfrac{1}{2}<\nu\leq0,\nonumber \\
\label{ineqb3}\int_0^x \mathrm{e}^{-\gamma t}t^{\nu}I_\nu(t)\,\mathrm{d}t&>\frac{1}{1-\gamma}\bigg(1-\frac{4\nu^2}{(2\nu-1)(1-\gamma)}\frac{1}{x}\bigg)\mathrm{e}^{-\gamma x}x^\nu I_\nu(x), \quad \nu\geq\tfrac{3}{2}, \\
\label{ineqb4}\int_0^x \mathrm{e}^{-\gamma t}t^{\nu}I_\nu(t)\,\mathrm{d}t&>\mathrm{e}^{-\gamma x}x^\nu\sum_{k=0}^\infty \gamma^k I_{\nu+k+1}(x), \quad \nu>-\tfrac{1}{2}, \\
\label{ineqb5}\int_0^x \mathrm{e}^{-\gamma t}I_0(t)\,\mathrm{d}t&>\frac{1}{1-\gamma}\mathrm{e}^{-\gamma x} (I_0(x)-1).
\end{align}
 Inequalities (\ref{ineqb2})--(\ref{ineqb5}) are tight as $x\rightarrow\infty$. In inequality (\ref{ineqb2}), $\gamma(a,x)=\int_0^xt^{a-1}\mathrm{e}^{-t}\,\mathrm{d}t$ is the lower incomplete gamma function.
\end{theorem}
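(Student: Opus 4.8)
The common engine for all five inequalities is the pair of recurrences $I_\nu'=I_{\nu+1}+\tfrac{\nu}{x}I_\nu$ and $I_{\nu-1}-I_{\nu+1}=\tfrac{2\nu}{x}I_\nu$. Writing $F(x)=\int_0^x \mathrm{e}^{-\gamma t}t^\nu I_\nu(t)\,\mathrm{d}t$ and $G_k(x)=\mathrm{e}^{-\gamma x}x^\nu I_{\nu+k}(x)$, a direct differentiation gives the key identity $G_k(x)=G_{k+1}'(x)+(\gamma+\tfrac{k+1}{x})G_{k+1}(x)$, so that (since $G_{k+1}(0)=0$ for $\nu>-\tfrac12$)
\begin{equation*}
\int_0^x G_k(t)\,\mathrm{d}t=G_{k+1}(x)+\int_0^x\Big(\gamma+\tfrac{k+1}{t}\Big)G_{k+1}(t)\,\mathrm{d}t.
\end{equation*}
Since $F'=G_0$, the case $k=0$ reads $F(x)=G_1(x)+\int_0^x(\gamma+\tfrac1t)G_1(t)\,\mathrm{d}t$. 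I would prove \eqref{ineqb4} by iterating this relation: discarding the nonnegative $\tfrac{k+1}{t}$ term gives $\int_0^x G_k>G_{k+1}(x)+\gamma\int_0^x G_{k+1}$, and telescoping from $k=0$ yields $F(x)>\sum_{k=1}^{n}\gamma^{k-1}G_k(x)+\gamma^n\int_0^x G_n$; letting $n\to\infty$ (the series converges by the rapid decay of $I_{\nu+k}$ in $k$) and dropping the positive remainder gives exactly \eqref{ineqb4}, with strictness inherited from the discarded terms.

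For the upper bound \eqref{ineqb1} I would work with $\phi(x)=M_{\nu,\gamma}(x_*)G_1(x)-F(x)$. Because $F'=G_0$ and $G_1'=G_0-(\gamma+\tfrac1x)G_1$, the derivative is free of $F$: $\phi'=(M-1)G_0-M(\gamma+\tfrac1x)G_1=G_1\big[(M-1)\rho-M(\gamma+\tfrac1x)\big]$, where $\rho(x)=I_\nu(x)/I_{\nu+1}(x)>1$. This is positive exactly when $M>\rho/(\rho-\gamma-\tfrac1x)$; using only $\rho>1$ and $\tfrac1x\le\tfrac1{x_*}$ (together with $x_*>\tfrac1{1-\gamma}$, which keeps $\rho-\gamma-\tfrac1x>0$) bounds the right-hand side strictly above by $\tfrac1{1-\gamma-1/x_*}=\tfrac{x_*}{(1-\gamma)x_*-1}$, the second entry of $M_{\nu,\gamma}(x_*)$. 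Hence $\phi'>0$ throughout $[x_*,\infty)$, so $\phi$ is strictly increasing there and it suffices to prove the single endpoint inequality $\phi(x_*)>0$, i.e. $F(x_*)<M_{\nu,\gamma}(x_*)G_1(x_*)$. The first entry $\tfrac{2(\nu+1+x_*)}{2\nu+1}$ is precisely what this endpoint estimate needs: substituting $I_{\nu+1}\le\tfrac{t}{2(\nu+1)}I_\nu$ into the $k=0$ identity gives the self-improving bound $\int_0^x\tfrac1tG_1\le\tfrac1{2(\nu+1)}F$, whence $\tfrac{2\nu+1}{2(\nu+1)}F\le G_1+\gamma\int_0^x G_1$, and a further estimate of $\int_0^x G_1$ yields $F(x_*)<\tfrac{2(\nu+1+x_*)}{2\nu+1}G_1(x_*)$. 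I expect this endpoint estimate, and the verification that the two entries of $M_{\nu,\gamma}(x_*)$ — arising from the distinct mechanisms of the behaviour near $x_*$ and the exponential growth rate $1-\gamma$ of $G_1$ at infinity — combine correctly, to be the main obstacle.

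The three lower bounds I would each reduce to the sign analysis of an auxiliary function vanishing at $x=0$. For \eqref{ineqb5} ($\nu=0$), put $\Psi(x)=(1-\gamma)F(x)-\mathrm{e}^{-\gamma x}(I_0(x)-1)$; then $\Psi'(x)=\mathrm{e}^{-\gamma x}(I_0(x)-I_1(x)-\gamma)$ changes sign exactly once (as $I_0-I_1$ decreases from $1$ to $0$), and since $\Psi(0)=0$ and $\Psi(\infty)=0$ the single interior maximum forces $\Psi>0$ on $(0,\infty)$. For \eqref{ineqb2} ($-\tfrac12<\nu\le0$) I would split off the leading term of the series for $I_\nu$, writing $\tilde I_\nu(t)=I_\nu(t)-\tfrac{t^\nu}{2^\nu\Gamma(\nu+1)}\ge0$ and evaluating $\tfrac1{2^\nu\Gamma(\nu+1)}\int_0^x\mathrm{e}^{-\gamma t}t^{2\nu}\,\mathrm{d}t$ as the incomplete-gamma contribution; the associated auxiliary function then has derivative $\mathrm{e}^{-\gamma x}x^\nu\big[(I_\nu-I_{\nu+1})-\tfrac{2\nu}{x}\tilde I_\nu\big]$, which is nonnegative exactly because $\nu\le0$ makes $-\tfrac{2\nu}{x}\tilde I_\nu\ge0$, giving the required monotonicity.

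For \eqref{ineqb3} ($\nu\ge\tfrac32$) the claim is equivalent to $\int_0^x\mathrm{e}^{-\gamma s}s^\nu(I_{\nu-1}-I_\nu)\,\mathrm{d}s<\tfrac{C}{x}G_0(x)$ with $C=\tfrac{4\nu^2}{(2\nu-1)(1-\gamma)}$. Writing the difference of the two sides as $R_C$ with $R_C(0)=0$, one computes $R_C'/G_0=\rho_1(\tfrac Cx-1)+1-\tfrac{2\nu+C\gamma}{x}+\tfrac{C(2\nu-1)}{x^2}$, where $\rho_1=I_{\nu+1}/I_\nu\in(0,1)$; the algebraic identity $C(1-\gamma)-2\nu=\tfrac{2\nu}{2\nu-1}>0$ (which is exactly what fixes the value of $C$) then forces $R_C'>0$, and the delicate point here is to confirm this positivity uniformly in $x$ using the two-sided bounds $1-\tfrac{2(\nu+1)}{x}<\rho_1<1$. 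In every case the tightness as $x\to\infty$ follows from $F(x)\sim\tfrac1{1-\gamma}\mathrm{e}^{-\gamma x}x^\nu I_\nu(x)$, which is immediate from the asymptotics of the integrand $\mathrm{e}^{-\gamma t}t^\nu I_\nu(t)\sim\tfrac{1}{\sqrt{2\pi}}t^{\nu-1/2}\mathrm{e}^{(1-\gamma)t}$.
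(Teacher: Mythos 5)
Your key identity $G_{k+1}'=G_k-\big(\gamma+\tfrac{k+1}{x}\big)G_{k+1}$ is correct, and it does yield clean proofs of two parts: your telescoping argument for (\ref{ineqb4}) is essentially the paper's own iteration, and your derivative computation for (\ref{ineqb2}) is an exact repackaging of the paper's integration by parts (both correct). The monotonicity half of your argument for (\ref{ineqb1}) — that $\phi'>0$ on $[x_*,\infty)$ follows from $\rho>1$ and the second entry of $M_{\nu,\gamma}(x_*)$ — is also sound and agrees with the paper. The genuine gap is exactly where you flag it: the endpoint estimate $F(x_*)<\tfrac{2(\nu+1+x_*)}{2\nu+1}G_1(x_*)$. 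After your self-improving step $\tfrac{2\nu+1}{2(\nu+1)}F\le G_1+\gamma\int_0^x G_1$, you still need $\gamma\int_0^{x_*}G_1\le\tfrac{x_*}{\nu+1}G_1(x_*)$, and no estimate available in your framework delivers this: bounding $\int_0^x G_1$ by $F$ (or by $\tfrac{x}{2(\nu+1)}F$) gives $\big(\tfrac{2\nu+1}{2(\nu+1)}-\gamma\big)F\le G_1$ (resp.\ $\tfrac{2\nu+1-\gamma x}{2(\nu+1)}F\le G_1$), which is vacuous once $\gamma\ge\tfrac{2\nu+1}{2(\nu+1)}$ (resp.\ $\gamma x_*\ge 2\nu+1$); since the hypothesis forces $x_*>\tfrac{1}{1-\gamma}$, the regime $\gamma$ close to $1$ — the whole difficulty of the theorem — is precisely the regime lost. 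Bounding $\int_0^{x_*}G_1$ by a multiple of $G_1(x_*)$ is a problem of the same type as (\ref{ineqb1}) itself, so as sketched the plan is circular. The paper closes this step with an idea absent from your proposal: $F/G_1$ is increasing in $\gamma$ (differentiate under the integral sign), so it suffices to bound it at $\gamma=1$, where the closed form (\ref{intfor}) gives $F/G_1=\tfrac{x}{2\nu+1}\big(\tfrac{I_\nu(x)}{I_{\nu+1}(x)}+1\big)$, and the rational bound of \cite{nasell2}, $\tfrac{I_{\nu+1}(x)}{I_\nu(x)}>\tfrac{x}{2(\nu+1)+x}$, then produces exactly the first entry of $M_{\nu,\gamma}(x_*)$. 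Without this device (or an equivalent one) the required uniformity in $\gamma$ is out of reach of your tools.

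Two further steps fail as written. For (\ref{ineqb5}), both facts underpinning your sign analysis are false: $I_0-I_1$ does not decrease from $1$ to $0$ — it decreases to a minimum of about $0.665$ near $x\approx 1.5$ and then increases to $+\infty$, since $I_0(x)-I_1(x)\sim\tfrac{\mathrm{e}^x}{2x\sqrt{2\pi x}}$ as $x\to\infty$ — and $\Psi(\infty)=+\infty$, not $0$ (tightness of (\ref{ineqb5}) is a statement about the ratio, not the difference). Hence for $\gamma$ close to $1$ the derivative $\Psi'$ changes sign twice ($+,-,+$), $\Psi$ has an interior local minimum, and your argument says nothing about its sign; the paper's proof is instead a single integration by parts, inequality (\ref{Imon}), and a rearrangement, valid for all $0<\gamma<1$. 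For (\ref{ineqb3}), your reduction of the claim to $R_C'>0$ is correct, but the positivity cannot be confirmed from the two-sided bounds $1-\tfrac{2(\nu+1)}{x}<\rho_1<1$ that you cite: for $x\le C$ they yield only $R_C'/G_0\ge\tfrac{C(1-\gamma)+2}{x}-\tfrac{3C}{x^2}$, which is negative for, e.g., $\nu=\tfrac32$, $\gamma=0.9$, $x=10$ (where in fact $R_C'/G_0\approx 0.39>0$). The paper avoids this ODE analysis entirely by invoking the previously established integral inequality (\ref{gaunt219}) of \cite{gaunt ineq3} to bound $2\nu\int_0^x\mathrm{e}^{-\gamma t}t^{\nu-1}I_{\nu-1}(t)\,\mathrm{d}t$; to close your version you would need a substantially sharper lower bound on $\rho_1$ (of Amos--Segura type), not the one you propose.
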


The following inequalities can be proved as a consequence of some of the upper bounds in Theorem \ref{notfin} given in Section \ref{sec3}.  We therefore defer the proof of Theorem \ref{tiger2} until Section \ref{sec3}.

\begin{theorem}\label{tiger2}Let $0<\gamma<1$.  Then, for $x>0$,
\begin{align}\label{ineqb10}\int_0^x\mathrm{e}^{-\gamma t}t^\nu I_\nu(t)\,\mathrm{d}t&<\frac{2(2\nu+7)}{(2\nu+1)(1-\gamma)}\mathrm{e}^{-\gamma x}x^\nu I_{\nu+1}(x), \quad \nu>-\tfrac{1}{2}, \\
\label{ineqb11}\int_0^x\mathrm{e}^{-\gamma t}t^\nu I_\nu(t)\,\mathrm{d}t&<\frac{2\nu+7}{(2\nu+1)(1-\gamma)}\mathrm{e}^{-\gamma x}x^\nu I_\nu(x), \quad \nu>-\tfrac{1}{2},\\
\label{ineqb12}\int_0^x\mathrm{e}^{-\gamma t}t^\nu I_\nu(t)\,\mathrm{d}t&>\frac{1}{1-\gamma}\bigg\{1-\frac{4\nu(2\nu+5)}{(2\nu-1)(1-\gamma)}\frac{1}{x}\bigg\}\mathrm{e}^{-\gamma x}x^\nu I_\nu(x), \quad \nu>\tfrac{1}{2}.
\end{align}
Inequality (\ref{ineqb12}) is tight as $x\rightarrow\infty$. 
\end{theorem}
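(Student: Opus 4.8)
The three inequalities all assert that the ratio of the integral (\ref{besint0}) to a reference quantity $\mathrm{e}^{-\gamma x}x^\nu I_{\nu+1}(x)$ or $\mathrm{e}^{-\gamma x}x^\nu I_\nu(x)$ stays within a constant multiple for all $x>0$; the author obtains them as corollaries of the uniform bounds for the VG expressions in Theorem \ref{notfin}, but the following self-contained monotonicity scheme (using only Theorem \ref{tiger1} and elementary recurrences) would also deliver them. Write $G(x)=\int_0^x\mathrm{e}^{-\gamma t}t^\nu I_\nu(t)\,\mathrm{d}t$, so that $G'(x)=\mathrm{e}^{-\gamma x}x^\nu I_\nu(x)$, and record the two differentiation identities $\frac{\mathrm{d}}{\mathrm{d}x}\big(\mathrm{e}^{-\gamma x}x^\nu I_\nu(x)\big)=\mathrm{e}^{-\gamma x}x^\nu\big(I_{\nu-1}(x)-\gamma I_\nu(x)\big)$ and $\frac{\mathrm{d}}{\mathrm{d}x}\big(\mathrm{e}^{-\gamma x}x^\nu I_{\nu+1}(x)\big)=\mathrm{e}^{-\gamma x}x^\nu\big(I_\nu(x)-(\gamma+\tfrac1x)I_{\nu+1}(x)\big)$, both immediate from $\frac{\mathrm{d}}{\mathrm{d}x}(x^\mu I_\mu)=x^\mu I_{\mu-1}$.

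For the upper bounds (\ref{ineqb10}) and (\ref{ineqb11}) I would study the ratio $\rho(x)=G(x)/\big(\mathrm{e}^{-\gamma x}x^\nu I_{\nu+1}(x)\big)$ (and its analogue with $I_\nu$). Using the identities above, $\rho$ satisfies the first-order relation $\rho'(x)=\tfrac{I_\nu}{I_{\nu+1}}\big(1-\rho(x)\big)+(\gamma+\tfrac1x)\rho(x)$, and the small-$x$ expansions of $I_\nu,I_{\nu+1}$ give $\rho(0^+)=\tfrac{2(\nu+1)}{2\nu+1}$, which is strictly below the claimed constant $C_1=\tfrac{2(2\nu+7)}{(2\nu+1)(1-\gamma)}$. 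The plan is a barrier argument: if $\rho(x_0)=C_1$ at a first such point $x_0$ then $\rho'(x_0)\ge0$, whereas the relation forces $\rho'(x_0)<0$ as soon as $\tfrac{I_\nu(x_0)}{I_{\nu+1}(x_0)}>\tfrac{C_1(\gamma+1/x_0)}{C_1-1}$; this contradiction would give $\rho<C_1$ throughout. The barrier inequality I would try to verify from the two elementary lower bounds $\tfrac{I_\nu}{I_{\nu+1}}>1$ (valid since $\nu>-\tfrac12$) and $\tfrac{I_\nu}{I_{\nu+1}}>\tfrac{2(\nu+1)}{x}$ (from $I_\nu=I_{\nu+2}+\tfrac{2(\nu+1)}{x}I_{\nu+1}$): the first settles large $x$ and the second small $x$. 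I would additionally invoke (\ref{ineqb1}) of Theorem \ref{tiger1} to dispatch $x\ge x_*$ outright, so that the barrier argument need only run on the bounded interval $(0,x_*]$. The bound (\ref{ineqb11}) with $I_\nu$ is handled identically, with $\rho(0^+)=0$.

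For the lower bound (\ref{ineqb12}) I would instead control the defect $E(x)=\tfrac{1}{1-\gamma}\mathrm{e}^{-\gamma x}x^\nu I_\nu(x)-G(x)$, which is positive for $\nu>\tfrac12$ (since $\tfrac{I_{\nu-1}}{I_\nu}-\gamma>1-\gamma$ gives $G'<\tfrac1{1-\gamma}\frac{\mathrm{d}}{\mathrm{d}x}(\mathrm{e}^{-\gamma x}x^\nu I_\nu)$) and vanishes at the origin. Differentiating and using $I_{\nu-1}-I_\nu<\tfrac{2\nu}{x}I_\nu$ yields $E'(x)<\tfrac{2\nu}{1-\gamma}\,\mathrm{e}^{-\gamma x}x^{\nu-1}I_\nu(x)$; integrating and bounding the resulting integral $\int_0^x\mathrm{e}^{-\gamma s}s^{\nu-1}I_\nu(s)\,\mathrm{d}s$ by a constant multiple of $\mathrm{e}^{-\gamma x}x^{\nu-1}I_\nu(x)$ (again via a recurrence, now needing $2\nu-1>0$) produces a defect bound of the form $E(x)<\tfrac{4\nu(2\nu+5)}{(2\nu-1)(1-\gamma)^2}\,\mathrm{e}^{-\gamma x}x^{\nu-1}I_\nu(x)$, which rearranges to (\ref{ineqb12}). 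The restriction $\nu>\tfrac12$ enters precisely through the factor $(2\nu-1)^{-1}$ and through the positivity of $E$.

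The main obstacle is the upper bounds in the regime $-\tfrac12<\nu<\tfrac12$, and more subtly the behaviour for $\gamma$ close to $1$. There the two crude ratio bounds cover only $(0,x_2]$ and $[x_1,\infty)$ for certain $x_1,x_2$ depending on $\nu,\gamma$, and these ranges cease to overlap as $\gamma\to1$ (the small-$x$ bound window $x_2$ stays finite while $x_1$ and $x_*$ both blow up), leaving an intermediate window where the barrier inequality is not settled; closing it demands a sharper, range-intermediate lower bound on $I_\nu/I_{\nu+1}$ (e.g.\ a continued-fraction estimate), and one must also check carefully that the constant $2\nu+7$ is large enough. This is exactly the difficulty flagged in the introduction, and it is why the author prefers to deduce (\ref{ineqb10})--(\ref{ineqb11}) from the more robust uniform bounds of Theorem \ref{notfin} rather than push the direct monotonicity argument through in the awkward parameter range.
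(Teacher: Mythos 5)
Your proposal is not a proof: by your own admission, the barrier argument for the upper bounds (\ref{ineqb10}) and (\ref{ineqb11}) cannot be closed in precisely the regime the theorem exists to cover. Your two crude lower bounds on $I_\nu/I_{\nu+1}$ verify the barrier inequality only for $x$ below a threshold $x_2$ that stays bounded (roughly $2\nu+1$) as $\gamma\to1$, and for $x$ above a threshold $x_1$ of order $\frac{1}{1-\gamma}$; inequality (\ref{ineqb1}) helps only for $x\geq x_*>\frac{1}{1-\gamma}$. The window $[x_2,\min(x_1,x_*))$ is left open and grows without bound as $\gamma\to1$. Since the case $\nu\geq\frac12$ is already covered by the earlier bounds (\ref{gau1}) and (\ref{gau2}), what your sketch actually establishes is essentially no more than what was known; the open case $-\frac12<\nu<\frac12$ with $\gamma$ near $1$ --- the very case flagged in the introduction --- is the one left unproved. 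The gap propagates to (\ref{ineqb12}): the constant $2\nu+5=2(\nu-1)+7$ in your defect bound can only come from applying (\ref{ineqb10}) at index $\nu-1$ (together with $I_\nu<I_{\nu-1}$), so the lower bound is conditional on the unproved upper bound. A further wrinkle: (\ref{ineqb11}) is not "handled identically", since the relevant ratio there is $I_{\nu-1}/I_\nu$, which is \emph{not} bounded below by $1$ when $-\frac12<\nu<\frac12$ (indeed $I_{\nu-1}(x)<I_\nu(x)$ for large $x$ in that range), and simply converting (\ref{ineqb10}) via $I_{\nu+1}<I_\nu$ would lose the factor of $2$ in the constant.

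The idea you are missing is the paper's use of the modified Bessel function of the second kind as a weight, which is exactly what tames the intermediate window where your ratio $\rho(x)$ genuinely grows linearly in $x$. The paper deduces Theorem \ref{tiger2} from Theorem \ref{notfin}: it bounds $\frac{\mathrm{e}^{-\beta x}K_{\nu+2}(x)}{x^{\nu-1}}\int_0^x\mathrm{e}^{\beta t}t^\nu I_\nu(t)\,\mathrm{d}t$ uniformly in $x$ by splitting at $x_*=\frac{2}{1+\beta}$. On $[x_*,\infty)$ it applies (\ref{ineqb1}) together with the product bound $xK_{\nu+2}(x)I_{\nu+1}(x)\leq 1$; on $[0,x_*)$ it uses monotonicity in $\beta$ to reduce to $\beta=-1$, where the integral has the closed form (\ref{intfor}), and then the product bounds (\ref{bdsjbc}) and (\ref{bdsjbc2}) give
\begin{equation*}
x^2K_{\nu+2}(x)\big(I_\nu(x)+I_{\nu+1}(x)\big)\leq 2x+2\nu+3,
\end{equation*}
a \emph{linear} function of $x$, which evaluated at $x_*\sim\frac{2}{1-\gamma}$ produces exactly the $O\big(\frac{1}{1-\gamma}\big)$ constant required. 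Finally the weight is removed via $\frac{1}{K_{\nu+2}(x)}<2xI_{\nu+1}(x)$ (the lower bound in (\ref{bdsjbc})), giving (\ref{ineqb10}); using $K_{\nu+1}$ instead, with the stronger product bound $xK_{\nu+1}(x)I_{\nu+1}(x)<\frac12$ from (\ref{bdsjbc1}), gives (\ref{ineqb11}) with the halved constant. Once (\ref{ineqb10}) is available, (\ref{ineqb12}) follows exactly along your defect computation, which is the paper's derivation via (\ref{1stint0}).
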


\noindent{\emph{Proof of Theorem \ref{tiger1}.}} (i) Fix $x_*>\frac{1}{1-\gamma}$.
Consider the function
\begin{equation*}u(x)=M_{\nu,\gamma}(x_*)\mathrm{e}^{-\gamma x}x^\nu I_{\nu+1}(x)-\int_0^x \mathrm{e}^{-\gamma t}t^\nu I_\nu(t)\,\mathrm{d}t.
\end{equation*}
We shall argue that $u(x)>0$ for all $x\geq x_*$, which will prove inequality (\ref{ineqb1}).  

Let us first prove that $u(x_*)>0$. Consider now the function
\begin{equation*}v(x)=\frac{\mathrm{e}^{\gamma x}}{x^\nu I_{\nu+1}(x)}\int_0^x\mathrm{e}^{-\gamma t} t^\nu I_\nu(t)\,\mathrm{d}t.
\end{equation*}
We shall show that $v(x_*)< M_{\nu,\gamma}(x_*)$, which will prove that $u(x_*)>0$. We have that
\begin{equation*}\frac{\partial v(x)}{\partial \gamma}=\frac{\mathrm{e}^{\gamma x}}{x^\nu I_{\nu+1}(x)}\int_0^x(x-t)\mathrm{e}^{-\gamma t} t^\nu I_\nu(t)\,\mathrm{d}t>0,
\end{equation*}
and therefore
\begin{align*}v(x)< \frac{\mathrm{e}^{ x}}{x^\nu I_{\nu+1}(x)}\int_0^x\mathrm{e}^{- t} t^\nu I_\nu(t)\,\mathrm{d}t=\frac{x}{2\nu+1}\bigg(\frac{I_\nu(x)}{I_{\nu+1}(x)}+1\bigg),
\end{align*}
where we evaluated the integral using (\ref{intfor}). We bound the ratio of modified Bessel functions of the first kind using the inequality 
\begin{equation*}\frac{I_{\mu+1}(x)}{I_{\mu}(x)}>\frac{x}{2(\mu+1)+x}, \quad x>0,\;\mu>-1,
\end{equation*}
which is the simplest lower bound in a sequence of rational bounds given in \cite{nasell2}. Applying this inequality gives us the desired bound
\begin{align*}v(x_*)<\frac{x_*}{2\nu+1}\bigg(\frac{2(\nu+1)+x_*}{x_*}+1\bigg)=\frac{2(\nu+1+x_*)}{2\nu+1}\leq M_{\nu,\gamma}(x_*).
\end{align*}

We now prove that $u'(x)>0$ for $x>x_*$, which will complete the proof.  A simple calculation using the differentiation formula (\ref{diffone}) gives that
\begin{align*}u'(x)&=M_{\nu,\gamma}(x_*)\frac{\mathrm{d}}{\mathrm{d}x}\big(\mathrm{e}^{-\gamma x}x^{-1}\cdot x^{\nu+1} I_{\nu+1}(x)\big)- \mathrm{e}^{-\gamma x}x^\nu I_\nu(x)\\
&=M_{\nu,\gamma}(x_*)\mathrm{e}^{-\gamma x}x^\nu\big(I_{\nu}(x)-x^{-1}I_{\nu+1}(x)-\gamma I_{\nu+1}(x)\big)-\mathrm{e}^{-\gamma x}x^\nu I_\nu(x).
\end{align*}
Using inequality (\ref{Imon}) now gives us the inequality
\begin{align*}u'(x)&>M_{\nu,\gamma}(x_*)\mathrm{e}^{-\gamma x}x^\nu\big(1-\gamma -x^{-1})I_\nu(x)-\mathrm{e}^{-\gamma x}x^\nu I_\nu(x)\\
&\geq\bigg(\frac{1-\gamma-x^{-1}}{1-\gamma-x_*^{-1}}-1\bigg)\mathrm{e}^{-\gamma x}x^\nu I_\nu(x)>0,
\end{align*}
for $x>x_*$, as required.

\vspace{2mm}

\noindent{(ii)} Now let $x>0$ and suppose $-\frac{1}{2}<\nu\leq0$.  Then, by integration by parts and the differentiation formula (\ref{diffone}), we have
\begin{align*}\int_0^x\mathrm{e}^{-\gamma t}\bigg(t^\nu I_\nu(t)-\frac{t^{2\nu}}{\Gamma(\nu+1)2^\nu}\bigg)\,\mathrm{d}t&=-\frac{1}{\gamma}\mathrm{e}^{-\gamma x}\bigg(x^\nu I_\nu(x)-\frac{x^{2\nu}}{\Gamma(\nu+1)2^\nu}\bigg)\\
&\quad+\frac{1}{\gamma}\int_0^x \mathrm{e}^{-\gamma t}\bigg(t^\nu I_{\nu-1}(t)-\frac{2\nu t^{2\nu-1}}{\Gamma(\nu+1) 2^\nu}\bigg)\,\mathrm{d}t,
\end{align*}
where the integrals can be seen to exist for $\nu>-\frac{1}{2}$ by (\ref{Itend0}) and the standard identity $\Gamma(x+1)=x\Gamma(x)$.  We also used (\ref{Itend0}) to compute the limit $\lim_{x\downarrow0}\big(x^\nu I_\nu(x)-\frac{x^{2\nu}}{\Gamma(\nu+1)2^\nu}\big)=0$. We can rearrange to get
\begin{align*}&\int_0^x \mathrm{e}^{-\gamma t}\bigg(t^\nu I_{\nu-1}(t)-\frac{2\nu t^{2\nu-1}}{\Gamma(\nu+1) 2^\nu}\bigg)\,\mathrm{d}t-\gamma\int_0^x\mathrm{e}^{-\gamma t}\bigg(t^\nu I_\nu(t)-\frac{t^{2\nu}}{\Gamma(\nu+1)2^\nu}\bigg)\,\mathrm{d}t\\
&\quad=\mathrm{e}^{-\gamma x}\bigg(x^\nu I_\nu(x)-\frac{x^{2\nu}}{\Gamma(\nu+1)2^\nu}\bigg),
\end{align*}
and then using the identity (\ref{Iidentity}) gives that
\begin{align}&\int_0^x \mathrm{e}^{-\gamma t}t^\nu I_{\nu+1}(t)\,\mathrm{d}t+2\nu\int_0^x\mathrm{e}^{-\gamma t}\bigg(t^{\nu-1}I_\nu(t)-\frac{t^{2\nu-1}}{\Gamma(\nu+1)2^\nu}\bigg)\,\mathrm{d}t-\gamma\int_0^x\mathrm{e}^{-\gamma t}t^\nu I_\nu(t)\,\mathrm{d}t\nonumber\\
\label{55555}&\quad=\mathrm{e}^{-\gamma x}\bigg(x^\nu I_\nu(x)-\frac{x^{2\nu}}{\Gamma(\nu+1)2^\nu}\bigg)-\gamma \int_0^x\mathrm{e}^{-\gamma t}\frac{t^{2\nu}}{\Gamma(\nu+1)2^\nu}\,\mathrm{d}t.
\end{align}
We now note that, by (\ref{Idefn}),
\begin{equation}\label{jjj}t^{\nu-1}I_\nu(t)-\frac{t^{2\nu-1}}{\Gamma(\nu+1)2^\nu}=\sum_{k=1}^{\infty} \frac{(\frac{1}{2}t)^{\nu+2k-1}}{\Gamma(\nu +k+1) k!}>0, \quad t>0.
\end{equation}
Therefore, by using inequality (\ref{jjj}) and that $-\frac{1}{2}<\nu\leq0$ to bound the second integral and inequality (\ref{Imon}) to bound the first integral, we obtain
\begin{align*}\int_0^x \mathrm{e}^{-\gamma t}t^\nu I_{\nu}(t)\,\mathrm{d}t>\frac{1}{1-\gamma}\bigg\{\mathrm{e}^{-\gamma x}\bigg(x^\nu I_\nu(x)-\frac{x^{2\nu}}{\Gamma(\nu+1)2^\nu}\bigg)-\gamma \int_0^x\mathrm{e}^{-\gamma t}\frac{t^{2\nu}}{\Gamma(\nu+1)2^\nu}\,\mathrm{d}t\bigg\},
\end{align*}
and we finally arrive at inequality (\ref{ineqb2}) by using a change of variable to evaluate the integral $\int_0^x\mathrm{e}^{-\gamma t}t^{2\nu}\,\mathrm{d}t=\frac{1}{\gamma^{2\nu+1}}\gamma(2\nu+1,\gamma x)$.

\vspace{2mm}

\noindent{(iii)}  An application of integration by parts gives that
\begin{align}\label{jj256}\int_0^x \mathrm{e}^{-\gamma t}t^{\nu}I_{\nu}(t)\,\mathrm{d}t&=-\frac{1}{\gamma}\mathrm{e}^{-\gamma x}x^{\nu}I_{\nu}(x)+\frac{1}{\gamma}\int_0^x\mathrm{e}^{-\gamma t}t^{\nu}I_{\nu-1}(t)\,\mathrm{d}t,
\end{align}
where we used that $\lim_{x\downarrow0}x^{\nu}I_{\nu}(x)=0$ for $\nu\geq\frac{3}{2}$ (see (\ref{Itend0})) and the differentiation formula (\ref{diffone}). Rearranging (\ref{jj256}) and using the identity (\ref{Iidentity}) gives that
\begin{align}\label{1stint}\int_0^x \mathrm{e}^{-\gamma t}t^{\nu}I_{\nu+1}(t)\,\mathrm{d}t-\gamma \int_0^x \mathrm{e}^{-\gamma t}t^{\nu}I_{\nu}(t)\,\mathrm{d}t=\mathrm{e}^{-\gamma x}x^\nu I_\nu(x)-2\nu\int_0^x \mathrm{e}^{-\gamma t}t^{\nu-1}I_{\nu}(t)\,\mathrm{d}t.
\end{align}
Using (\ref{Imon}) to bound the first integral in (\ref{1stint}), followed by a rearrangement and then another application of inequality (\ref{Imon}) gives the inequality
\begin{align}\int_0^x \mathrm{e}^{-\gamma t}t^{\nu}I_{\nu}(t)\,\mathrm{d}t&>\frac{1}{1-\gamma}\bigg\{\mathrm{e}^{-\gamma x}x^\nu I_\nu(x)-2\nu\int_0^x \mathrm{e}^{-\gamma t}t^{\nu-1}I_{\nu}(t)\,\mathrm{d}t\bigg\}\nonumber\\
\label{1stint0}&>\frac{1}{1-\gamma}\bigg\{\mathrm{e}^{-\gamma x}x^\nu I_\nu(x)-2\nu\int_0^x \mathrm{e}^{-\gamma t}t^{\nu-1}I_{\nu-1}(t)\,\mathrm{d}t\bigg\}.
\end{align}
We now recall an inequality that is immediate from inequality (2.19) of \cite{gaunt ineq3}: for $x>0$,
\begin{equation}\label{gaunt219}\int_0^x \mathrm{e}^{-\gamma t}t^{\mu}I_{\mu}(t)\,\mathrm{d}t<\frac{2(\mu+1)}{2\mu+1}\mathrm{e}^{-\gamma x}x^\mu I_{\mu+1}(x), \quad\mu\geq\tfrac{1}{2},\;0<\gamma<1.
\end{equation}
Applying this inequality to (\ref{1stint0}) yields inequality (\ref{ineqb3}), as required.

\vspace{2mm}

\noindent{(iv)} Let $\nu>-\frac{1}{2}$, which will ensure that all integrals that appear in this proof of inequality (\ref{ineqb4}) exist.  We begin with a similar integration by parts to part (iii):
\begin{align}\label{jj25}\int_0^x \mathrm{e}^{-\gamma t}t^{\nu+1}I_{\nu+1}(t)\,\mathrm{d}t&=-\frac{1}{\gamma}\mathrm{e}^{-\gamma x}x^{\nu+1}I_{\nu+1}(x)+\frac{1}{\gamma}\int_0^x\mathrm{e}^{-\gamma t}t^{\nu+1}I_\nu(t)\,\mathrm{d}t,
\end{align}
where we used that $\lim_{x\downarrow0}x^{\nu+1}I_{\nu+1}(x)=0$ for $\nu>-\frac{1}{2}$ (see (\ref{Itend0})) and the differentiation formula (\ref{diffone}). Rearranging (\ref{jj25}) gives
\begin{align}\label{jj26}\int_0^x\mathrm{e}^{-\gamma t}t^{\nu+1}I_\nu(t)\,\mathrm{d}t=\mathrm{e}^{-\gamma x}x^{\nu+1}I_{\nu+1}(x)+\gamma\int_0^x \mathrm{e}^{-\gamma t}t^{\nu+1}I_{\nu+1}(t)\,\mathrm{d}t.
\end{align}
We now note that, for $x>0$, $\int_0^x\mathrm{e}^{-\gamma t}t^{\nu+1}I_\nu(t)\,\mathrm{d}t<x\int_0^x\mathrm{e}^{-\gamma t}t^{\nu}I_\nu(t)\,\mathrm{d}t$, which holds because $I_\nu(x)>0$ for $x>0$, $\nu>-\frac{1}{2}$.  Applying this inequality to (\ref{jj26}) yields
\begin{align}\label{jj27}\int_0^x\mathrm{e}^{-\gamma t}t^{\nu}I_\nu(t)\,\mathrm{d}t>\mathrm{e}^{-\gamma x}x^{\nu}I_{\nu+1}(x)+\frac{\gamma}{x}\int_0^x \mathrm{e}^{-\gamma t}t^{\nu+1}I_{\nu+1}(t)\,\mathrm{d}t.
\end{align}
From (\ref{jj27}) we get another inequality
\begin{align*}\int_0^x\mathrm{e}^{-\gamma t}t^{\nu}I_\nu(t)\,\mathrm{d}t&>\mathrm{e}^{-\gamma x}x^{\nu}I_{\nu+1}(x)+\frac{\gamma}{x}\bigg(\mathrm{e}^{-\gamma x}x^{\nu+1}I_{\nu+2}(x)+\frac{\gamma}{x}\int_0^x \mathrm{e}^{-\gamma t}t^{\nu+2}I_{\nu+2}(t)\,\mathrm{d}t\bigg)\\
&=\mathrm{e}^{-\gamma x}x^{\nu}I_{\nu+1}(x)+\gamma \mathrm{e}^{-\gamma x}x^{\nu}I_{\nu+2}(x)+\frac{\gamma^2}{x^2}\int_0^x \mathrm{e}^{-\gamma t}t^{\nu+2}I_{\nu+2}(t)\,\mathrm{d}t.
\end{align*}
Iterating this procedure then yields inequality (\ref{ineqb4}).  In applying this iteration, it should be noted that the series $\sum_{k=0}^\infty \gamma^k I_{\nu+k+1}(x)$ is absolutely convergent.  To see this, we can repeatedly use inequality (\ref{Imon}) (as $\nu>-\frac{1}{2}$) to obtain that, for all $x>0$,
\[\sum_{k=0}^\infty \gamma^k I_{\nu+k+1}(x)<I_{\nu+1}(x)\sum_{k=0}^\infty \gamma^k=\frac{I_{\nu+1}(x)}{1-\gamma}, \]
and the geometric series converges because $0<\gamma<1$.

\vspace{2mm}

\noindent{(v)}  By integration by parts, we have
\begin{align}\int_0^x \mathrm{e}^{-\gamma t}I_0(t)\,\mathrm{d}t&=-\frac{1}{\gamma}\mathrm{e}^{-\gamma x}\big(I_0(x)-1\big)+\frac{1}{\gamma}\int_0^x\mathrm{e}^{-\gamma t}I_1(t)\,\mathrm{d}t \nonumber\\
\label{j5}&<-\frac{1}{\gamma}\mathrm{e}^{-\gamma x}\big(I_0(x)-1\big)+\frac{1}{\gamma}\int_0^x\mathrm{e}^{-\gamma t}I_0(t)\,\mathrm{d}t,
\end{align}
where in the first step we used that $I_0(0)=1$ (this is readily seen from (\ref{Idefn})) and the differentiation formula (\ref{diff0}), and we used inequality (\ref{Imon}) to obtain the inequality. Rearranging (\ref{j5}) yields inequality (\ref{ineqb5}).

\vspace{2mm}

\noindent{(vi)} Finally, we prove that inequalities (\ref{ineqb2})--(\ref{ineqb5}) are tight as $x\rightarrow\infty$.  We start by noting that a straightforward asymptotic analysis using the limiting form (\ref{Itendinfinity}) gives that, for $\nu>-\frac{1}{2}$ and $0<\gamma<1$, 
\begin{equation}\label{eqeq1} \int_0^x \mathrm{e}^{-\gamma t}t^\nu  I_{\nu}(t)\,\mathrm{d}t\sim \frac{1}{\sqrt{2\pi}(1-\gamma)}x^{\nu-1/2}\mathrm{e}^{(1-\gamma)x}, \quad x\rightarrow\infty,
\end{equation}
and we also have, for $n\in\mathbb{R}$,
\begin{equation}\label{eqeq2}\mathrm{e}^{-\gamma x}x^\nu I_{\nu+n}(x)\sim  \frac{1}{\sqrt{2\pi}}x^{\nu-1/2}\mathrm{e}^{(1-\gamma)x}, \quad x\rightarrow\infty.
\end{equation}
That inequalities (\ref{ineqb2}), (\ref{ineqb3}) and (\ref{ineqb5}) are tight as $x\rightarrow\infty$ follows directly from  (\ref{eqeq1}) and (\ref{eqeq2}). As an example, for inequality (\ref{ineqb5}), we have, as $x\rightarrow\infty$,
\begin{align*}\int_0^x \mathrm{e}^{-\gamma t}I_0(t)\,\mathrm{d}t\sim \frac{\mathrm{e}^{(1-\gamma)x}}{(1-\gamma)\sqrt{2\pi x}}\quad \text{and} \quad  \frac{1}{1-\gamma}\mathrm{e}^{-\gamma x} (I_0(x)-1)\sim \frac{\mathrm{e}^{(1-\gamma)x}}{(1-\gamma)\sqrt{2\pi x}},
\end{align*}
and the tightness of inequalities (\ref{ineqb2}) and (\ref{ineqb3}) is established similarly. For the tightness of inequality (\ref{ineqb4}) we just need to additionally note that $\sum_{k=0}^\infty\gamma^k=\frac{1}{1-\gamma}$, as $0<\gamma<1$. \hfill $\square$

\begin{remark}Let $0<\gamma<1$. Then the following inequalities hold. For $x>0$,
\begin{align}\int_0^x \mathrm{e}^{-\gamma t}t^{\nu}I_{\nu+1}(t)\,\mathrm{d}t&>\frac{1}{1-\gamma}\bigg\{\mathrm{e}^{-\gamma x}x^\nu\bigg(I_\nu(x)-\frac{x^\nu}{\Gamma(\nu+1)2^\nu}\bigg)-\frac{\gamma(2\nu+1,\gamma x)}{\Gamma(\nu+1)2^\nu\gamma^{2\nu}}\bigg\},\nonumber \\
\label{ineqb21}&\quad\quad\quad\quad\quad\quad\quad\quad\quad\quad\quad\quad\quad\quad\quad\quad\quad\quad\quad-\tfrac{1}{2}<\nu\leq0,\\
\label{ineqb22}\int_0^x \mathrm{e}^{-\gamma t}t^{\nu}I_{\nu+1}(t)\,\mathrm{d}t&>\frac{1}{1-\gamma}\bigg(1-\frac{4\nu^2}{(2\nu-1)(1-\gamma)}\frac{1}{x}\bigg)\mathrm{e}^{-\gamma x}x^\nu I_\nu(x), \quad \nu\geq\tfrac{3}{2}, \\
\label{ineqb23}\int_0^x\mathrm{e}^{-\gamma t}t^\nu I_{\nu+1}(t)\,\mathrm{d}t&>\frac{1}{1-\gamma}\bigg(1-\frac{4\nu(4\nu+1)}{(2\nu-1)(1-\gamma)}\frac{1}{x}\bigg)\mathrm{e}^{-\gamma x}x^\nu I_\nu(x), \quad \nu>\tfrac{1}{2}.
\end{align}
These inequalities are stronger than inequalities (\ref{ineqb2}), (\ref{ineqb3}) and (\ref{ineqb12}), because $I_{\nu+1}(x)<I_\nu(x)$, $x>0$, $\nu>-\frac{1}{2}$ (see (\ref{Imon})). Inequality (\ref{ineqb21}) follows by in part (ii) of the proof of Theorem \ref{tiger1} applying inequality (\ref{Imon}) to bound the third integral in (\ref{55555}), rather than the first integral. Inequality (\ref{ineqb22}) follows from in part (iii) of the proof applying inequality (\ref{Imon}) to the second integral in (\ref{1stint}), rather than the first integral. Examining the proof of inequality (\ref{ineqb12}) below, it can be seen that this modification that allows us to obtain inequality (\ref{ineqb22}) rather than inequality (\ref{ineqb3}) also allows us to obtain inequality (\ref{ineqb23}).

We note that setting $\nu=0$ in inequality (\ref{ineqb21}) yields the neat inequality
\begin{equation*}\int_0^x \mathrm{e}^{-\gamma t}I_1(t)\,\mathrm{d}t>\frac{1}{1-\gamma}(\mathrm{e}^{-\gamma x} I_0(x)-1), \quad x>0,\; 0<\gamma<1.
\end{equation*}
\end{remark}

\begin{remark}Unlike the other bounds presented in this section, inequality (\ref{ineqb1}) is only valid for $x\geq x_*$, where $x_*>\frac{1}{1-\gamma}$.  Crucially, the bound is valid for all $\nu>-\frac{1}{2}$, $0<\gamma<1$, though.  That the bound holds only in the region $x\geq x_*$ is sufficient for our goal of deriving uniform bounds for the expressions (\ref{term1}) and (\ref{term2}) (and a related expression) in Section \ref{sec3}. Interestingly, it is through these uniform bounds derived in Section \ref{sec3} that we obtain inequalities (\ref{ineqb10}) and (\ref{ineqb11}), which hold for all $\nu>-\frac{1}{2}$, $0<\gamma<1$ and all $x>0$.  
\end{remark}

\begin{remark}\label{rem1}The inequalities of Theorems \ref{tiger1} and \ref{tiger2} for the integral (\ref{besint0}) complement those of Theorem 2.1 of \cite{gaunt ineq1} and Theorem 2.3 of \cite{gaunt ineq3}.  We provide a discussion here. Throughout this remark $0<\gamma<1$.

The only other lower bound in the literature is the following one of \cite{gaunt ineq3}: $\int_0^x \mathrm{e}^{-\gamma t}t^{\nu}I_\nu(t)\,\mathrm{d}t>\mathrm{e}^{-\gamma x}x^\nu I_{\nu+1}(x)$, $x>0$, $\nu>-\tfrac{1}{2}$. As $I_\nu(x)>0$ for $x>0$, $\nu\geq-1$, it follows that inequality (\ref{ineqb4}) improves on this inequality. 

Inequality (2.19) of \cite{gaunt ineq3} states that, for $x>0$,
\begin{align}\label{gau9}\int_0^x \mathrm{e}^{-\gamma t}t^{\nu}I_\nu(t)\,\mathrm{d}t&<\frac{\mathrm{e}^{-\gamma x}x^\nu}{(2\nu+1)(1-\gamma)}\Big(2(\nu+1)I_{\nu+1}(x)-I_{\nu+3}(x)\Big), \quad \nu\geq\tfrac{1}{2},\\
\label{gau1}&<\frac{2(\nu+1)}{(2\nu+1)(1-\gamma)}\mathrm{e}^{-\gamma x}x^\nu I_{\nu+1}(x), \quad \nu\geq\tfrac{1}{2},
\end{align}
with the same inequalities being valid for all $\nu>-\frac{1}{2}$ in the case $\gamma=0$ (see inequalities (2.17) and (2.18) of \cite{gaunt ineq3}). Also, combining inequalities (2.3) and (2.5) of \cite{gaunt ineq1} yields the following upper bound: for $x>0$,
\begin{equation}\label{gau2}\int_0^x \mathrm{e}^{-\gamma t}t^{\nu}I_\nu(t)\,\mathrm{d}t<\frac{1}{1-\gamma}\mathrm{e}^{-\gamma x}x^\nu I_\nu(x), \quad \nu\geq\tfrac{1}{2}.
\end{equation}
Inequalities (\ref{ineqb10}) and (\ref{ineqb11}) extend the range of validity of inequalities (\ref{gau1}) and (\ref{gau2}) from $\nu\geq\frac{1}{2}$ to $\nu>-\frac{1}{2}$ at the expense of larger multiplicative constants. Unlike inequalities (\ref{gau9}) and (\ref{gau2}), our inequalities (\ref{ineqb10}) and (\ref{ineqb11}) are not tight in the limit $x\rightarrow\infty$, although they are of the correct asymptotic order $O(x^{\nu-1/2}\mathrm{e}^{(1-\gamma) x})$ as $x\rightarrow\infty$. The multiplicative constant of (\ref{ineqb11}) is half that of (\ref{ineqb10}), although (\ref{ineqb10}) has the advantage of also having the correct asymptotic order $O(x^{2\nu+1})$ as $x\downarrow0$, whereas (\ref{ineqb11}) is $O(x^{2\nu})$ as $x\downarrow0$.

The inequalities derived in this paper together with those presented in this remark allow for a number of two-sided inequalities to be stated for the integral (\ref{besint0}).
A neat example is that, for $x>0$,
\begin{equation}\label{gau3}\mathrm{e}^{-\gamma x}x^\nu\sum_{k=0}^\infty \gamma^k I_{\nu+k+1}(x)<\int_0^x \mathrm{e}^{-\gamma t}t^{\nu}I_\nu(t)\,\mathrm{d}t<\frac{1}{1-\gamma}\mathrm{e}^{-\gamma x}x^\nu I_\nu(x), \quad \nu\geq\tfrac{1}{2}.
\end{equation}
We used \emph{Mathematica} to compute the relative error in approximating the integral $F_{\nu,\gamma}(x)=\int_0^x \mathrm{e}^{-\gamma t}t^{\nu}I_\nu(t)\,\mathrm{d}t$ by the upper bound in (\ref{gau3}), which we denote by $U_{\nu,\gamma}(x)$, and the lower bound in (\ref{gau3}) truncated at the fifth term in the sum,  $L_{\nu,\gamma}(x)=\mathrm{e}^{-\gamma x}x^\nu\sum_{k=0}^4 \gamma^k I_{\nu+k+1}(x)$. The results are reported in Tables \ref{table1} and \ref{table2}. We observe that, for given $x$ and $\nu$, the relative error in approximating $F_{\nu,\gamma}(x)$ by either $L_{\nu,\gamma}(x)$ or $U_{\nu,\gamma}(x)$ increases as $\gamma$ increases. We see that, for given $x$ and $\gamma$, the relative error in approximating $F_{\nu,\gamma}(x)$ by $L_{\nu,\gamma}(x)$ decreases as $\nu$ increases, whilst the relative error in approximating $F_{\nu,\gamma}(x)$ by $U_{\nu,\gamma}(x)$ increases as $\nu$ increases. For given $\nu$ and $\gamma$, the relative error in approximating $F_{\nu,\gamma}(x)$ by $U_{\nu,\gamma}(x)$ decreases as $x$ increases.  This error will approach 0 as $x\rightarrow\infty$, because the bound is tight in this limit.  However, the bound performs poorly for `small' $x$.  Indeed, a simple asymptotic analysis using (\ref{Itend0}) shows that $\frac{U_{\nu,\gamma}(x)}{F_{\nu,\gamma}(x)}\sim\frac{2\nu+1}{(1-\gamma)x}$, as $x\downarrow0$, meaning that the relative error blows up in this limit. The lower bound performs better for `small' $x$, which can be seen because $\lim_{x\downarrow0}\big(1-\frac{L_{\nu,\gamma}(x)}{F_{\nu,\gamma}(x)}\big)=\frac{1}{2(\nu+1)}$.  As a result of truncating the lower bound in (\ref{gau3}) at the fifth term, we lose some accuracy for larger values of $x$, particularly for larger $\gamma$. For example, in the case $\gamma=0.75$, $\sum_{k=0}^\infty 0.75^k=4$ and $\sum_{k=0}^4 0.75^k=3.0508$. Using this and the limiting forms (\ref{eqeq1}) and (\ref{Itendinfinity}) we have that $\lim_{x\rightarrow\infty}\big(1-\frac{L_{\nu,0.75}(x)}{F_{\nu,0.75}(x)}\big)=0.2373$, for all $\nu>-\frac{1}{2}$, whereas the relative error in this limit in approximating $F_{\nu,0.75}(x)$ using the lower bound in (\ref{gau3}) is in fact 0. 
For the cases $\nu=1$ and $\nu=2.5$ we see the relative error decreases down to this limit as $x$ gets larger (after initially increasing for smaller $x$), whilst for the $\nu=5$ and $\nu=10$ cases, the relative error is still increasing from the initial value of $\frac{1}{2(\nu+1)}$ and does not reach the value of 0.2373 for $x\leq100$.

\begin{table}[h]
\centering
\caption{\footnotesize{Relative error in approximating $F_{\nu,\gamma}(x)$ by $L_{\nu,\gamma}(x)$.}}
\label{table1}
{\scriptsize
\begin{tabular}{|c|rrrrrrr|}
\hline
 \backslashbox{$(\nu,\gamma)$}{$x$}      &    0.5 &    5 &    10 &    15 &    25 &    50 & 100   \\
 \hline
$(1,0.25)$ & 0.2563 & 0.2141 & 0.1423 & 0.1028 & 0.0656 & 0.0346 & 0.0182 \\
$(2.5,0.25)$ & 0.1459 & 0.1403 & 0.1100 & 0.0864 & 0.0591 & 0.0329 & 0.0177  \\
$(5,0.25)$ & 0.0846 & 0.0872 & 0.0780 & 0.0670 & 0.0503  & 0.0302 & 0.0169  \\
$(10,0.25)$ & 0.0459 & 0.0481 &  0.0473 & 0.0445 & 0.0378  & 0.0257 & 0.0155 \\   
  \hline
$(1,0.5)$ &0.2644  & 0.2848 & 0.2294 & 0.1846 & 0.1341 & 0.0869 & 0.0602  \\
$(2.5,0.5)$ & 0.1494 & 0.1756 & 0.1625 & 0.1428  & 0.1133 & 0.0791 &  0.0570 \\
$(5,0.5)$ & 0.0860 & 0.1025 & 0.1052 & 0.1005 & 0.0881 & 0.0680 & 0.0522 \\
$(10,0.5)$ & 0.0464 & 0.0533 & 0.0577  & 0.0591 &  0.0580 & 0.0515 & 0.0440 \\ 
\hline
$(1,0.75)$ & 0.2726 & 0.3756 & 0.3829 & 0.3683 & 0.3371 & 0.2953 & 0.2683  \\
$(2.5,0.75)$ & 0.1530 & 0.2211 & 0.2504 & 0.2604 & 0.2640 & 0.2581 & 0.2500 \\
$(5,0.75)$ & 0.0874 & 0.1214 & 0.1470 & 0.1639 & 0.1850 & 0.2084 & 0.2226  \\
$(10,0.75)$ & 0.0468 & 0.0592 & 0.0717  &0.0829  & 0.1028  & 0.1400 & 0.1774 \\  
  \hline
\end{tabular}}
\end{table}

\begin{table}[h]
\centering
\caption{\footnotesize{Relative error in approximating $F_{\nu,\gamma}(x)$ by $U_{\nu,\gamma}(x)$.}}
\label{table2}
{\scriptsize
\begin{tabular}{|c|rrrrrrr|}
\hline
 \backslashbox{$(\nu,\gamma)$}{$x$}      &    0.5 &    5 &    10 &    15 &    25 &    50 & 100   \\
 \hline
$(1,0.25)$ & 6.8497 & 0.2472 & 0.0864 & 0.0520 & 0.0292 & 0.0139 & 0.0068 \\
$(2.5,0.25)$ & 15.7858 & 0.8889 & 0.3548 & 0.2155 & 0.1197 & 0.0565 & 0.0274  \\
$(5,0.25)$ & 28.0748 & 2.0493 & 0.8480 & 0.5129 & 0.2806 & 0.1300 & 0.0625  \\
$(10,0.25)$ & 54.7097 & 4.5473 & 1.9626  & 1.1871 & 0.6377  & 0.2868 & 0.1351 \\   
  \hline
$(1,0.5)$ & 10.4043 & 0.4345 & 0.1459 & 0.0834 & 0.0452 & 0.0212 & 0.0103  \\
$(2.5,0.5)$ & 22.2524 & 1.3875 & 0.5574 & 0.3359 & 0.1842 & 0.0858 & 0.0414   \\
$(5,0.5)$ & 42.1550 & 3.1131 & 1.2980 & 0.7858 & 0.4286 & 0.1972 & 0.0943 \\
$(10,0.5)$ & 82.0875 & 6.8444 & 2.9641  & 1.7968 &  0.9663 & 0.4339 & 0.2037 \\ 
\hline
$(1,0.75)$ & 22.0780 & 1.0751 & 0.3851 & 0.2089 & 0.1019 & 0.0444 & 0.0210   \\
$(2.5,0.75)$ & 44.6563 & 2.9211 & 1.2068 & 0.7284 & 0.3933 & 0.1783 &  0.0845 \\
$(5,0.75)$ & 84.3964 & 6.3211 & 2.6722 & 1.6281 & 0.8891 & 0.4056 & 0.1918   \\
$(10,0.75)$ & 164.2213 & 13.7414 & 5.9790  & 3.6381 &  1.9647 & 0.8827 & 0.4126 \\  
  \hline
\end{tabular}}
\end{table}
\end{remark}

\section{Uniform bounds for some expressions involving the integral}\label{sec3}


In this section, we apply the upper bound (\ref{ineqb1}) to obtain uniform bounds for expressions (\ref{term1}) and (\ref{term2}).  Our upper uniform bounds will lead to technical advances in Stein's method for VG approximation \cite{gaunt vg3}. In addition, we obtain uniform bounds for a related expression; the upper bound will enable us to prove inequality (\ref{ineqb10}). Before doing so, we collect some inequalities for products of modified Bessel functions.  Inequality (\ref{bdsjbc1}) is  given in the proof of Theorem 5 of \cite{gaunt ineq2}, and is a simple consequence of Theorem 4.1 of \cite{hartman}. Inequality (\ref{bdsjbc}) is proved in Lemma 3 of \cite{gaunt ineq2}. Other results and inequalities for the product $I_{\nu}(x)K_{\nu}(x)$ are given in \cite{baricz,baricz2}.

For $x\geq0$,
\begin{equation}\label{bdsjbc1}0\leq xK_{\nu}(x)I_\nu(x)<\frac{1}{2}, \quad \nu>\tfrac{1}{2},
\end{equation}
and
\begin{equation}\label{bdsjbc}\frac{1}{2}< xK_{\nu+1}(x)I_\nu(x)\leq1, \quad \nu\geq-\tfrac{1}{2}.
\end{equation}
We will also need the following lemma.

\begin{lemma}For $x>0$,
\begin{equation}\label{bdsjbc2}\frac{1}{2}< xK_{\nu+2}(x)I_\nu(x)<1+\frac{2\nu+3}{x}, \quad \nu\geq-\tfrac{1}{2}.
\end{equation}
\end{lemma}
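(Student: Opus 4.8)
The plan is to reduce the two-sided estimate to the product bounds (\ref{bdsjbc1}) and (\ref{bdsjbc}) together with the monotonicity (\ref{Imon}), using the recurrence (\ref{Iidentity}) only to shift the order of the $I$-factor. For the lower bound I would first apply (\ref{bdsjbc}) with $\nu$ replaced by $\nu+1$, which is legitimate since $\nu+1\geq\tfrac12\geq-\tfrac12$; this gives $xK_{\nu+2}(x)I_{\nu+1}(x)>\tfrac12$. Because $I_{\nu+1}(x)<I_\nu(x)$ by (\ref{Imon}) and $K_{\nu+2}(x)>0$, it follows that $xK_{\nu+2}(x)I_\nu(x)>xK_{\nu+2}(x)I_{\nu+1}(x)>\tfrac12$, which is the left-hand inequality.

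For the upper bound the crucial decision is which Bessel factor to expand. The naive route, expanding $K_{\nu+2}$ via its standard three-term recurrence $K_{\nu+2}(x)=K_\nu(x)+\tfrac{2(\nu+1)}{x}K_{\nu+1}(x)$, produces the term $xK_\nu(x)I_\nu(x)$, which is controlled by (\ref{bdsjbc1}) only when $\nu>\tfrac12$. In the awkward range $-\tfrac12\leq\nu\leq\tfrac12$ there is no stated bound for $xK_\nu(x)I_\nu(x)$, and this is the main obstacle. I would sidestep it by instead expanding the $I$-factor with (\ref{Iidentity}), with the order raised by one, namely $I_\nu(x)=I_{\nu+2}(x)+\tfrac{2(\nu+1)}{x}I_{\nu+1}(x)$. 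Multiplying by $xK_{\nu+2}(x)$ then yields the identity $xK_{\nu+2}(x)I_\nu(x)=xK_{\nu+2}(x)I_{\nu+2}(x)+2(\nu+1)K_{\nu+2}(x)I_{\nu+1}(x)$, in which every $I$-index has been pushed up to $\nu+1$ or $\nu+2$.

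Now both pieces fall directly inside the stated ranges for all $\nu\geq-\tfrac12$. Since $\nu+2\geq\tfrac32>\tfrac12$, the first term is bounded by (\ref{bdsjbc1}) to give $xK_{\nu+2}(x)I_{\nu+2}(x)<\tfrac12$; writing the second term as $\tfrac{2(\nu+1)}{x}\cdot xK_{\nu+2}(x)I_{\nu+1}(x)$ and using the upper bound in (\ref{bdsjbc}) (at order $\nu+1$) gives $2(\nu+1)K_{\nu+2}(x)I_{\nu+1}(x)\leq\tfrac{2(\nu+1)}{x}$. Adding the two bounds produces $xK_{\nu+2}(x)I_\nu(x)<\tfrac12+\tfrac{2\nu+2}{x}$, which is in fact slightly stronger than the asserted bound $1+\tfrac{2\nu+3}{x}$ and certainly implies it. I do not anticipate any further difficulty beyond the bookkeeping of the reindexing, whose whole purpose is to keep each use of (\ref{bdsjbc1}) and (\ref{bdsjbc}) within its stated range of validity.
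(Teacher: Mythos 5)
Your proposal is correct, and for the upper bound it takes a genuinely different route from the paper's. For the lower bound the two arguments are near-identical one-liners: the paper applies the lower bound of (\ref{bdsjbc}) at order $\nu$ and then uses the monotonicity of $K$ in its order, inequality (\ref{cake}), to replace $K_{\nu+1}$ by $K_{\nu+2}$; you instead apply (\ref{bdsjbc}) at order $\nu+1$ and use the monotonicity of $I$, inequality (\ref{Imon}), to replace $I_{\nu+1}$ by $I_{\nu}$. Both are valid. For the upper bound the paper writes $xK_{\nu+2}(x)I_\nu(x)=\frac{K_{\nu+2}(x)}{K_{\nu+1}(x)}\cdot xK_{\nu+1}(x)I_\nu(x)$, bounds the second factor by $1$ via (\ref{bdsjbc}), and bounds the ratio of Bessel functions of the second kind by $1+\frac{2\nu+3}{x}$ using an inequality of Segura cited for exactly this purpose; that is where the constant $1+\frac{2\nu+3}{x}$ comes from. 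You avoid the external ratio bound altogether: the recurrence (\ref{Iidentity}) at order $\nu+1$ gives $I_\nu(x)=I_{\nu+2}(x)+\frac{2(\nu+1)}{x}I_{\nu+1}(x)$, and then (\ref{bdsjbc1}) at order $\nu+2>\frac{1}{2}$ and the upper bound of (\ref{bdsjbc}) at order $\nu+1$ control the two resulting products, yielding $xK_{\nu+2}(x)I_\nu(x)<\frac{1}{2}+\frac{2\nu+2}{x}$. This is strictly smaller than $1+\frac{2\nu+3}{x}$ for all $x>0$, so it implies the stated inequality; in fact it is sharper in a meaningful way, since $xK_{\nu+2}(x)I_\nu(x)\rightarrow\frac{1}{2}$ as $x\rightarrow\infty$ (by (\ref{Itendinfinity}) and (\ref{Ktendinfinity})), so your bound has the correct limiting constant whereas the paper's tends to $1$. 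What the paper's route buys is an intermediate ratio estimate that can be kept in its sharper square-root form if needed elsewhere; what your route buys is a self-contained proof, using only the identity and product inequalities already stated in the paper, together with a strictly better constant.
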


\begin{proof}The lower bound follows from the lower bound of (\ref{bdsjbc}) by inequality (\ref{cake}). To prove the upper bound, we note the following inequality of \cite{segura}: for $x>0$,
\begin{equation*}\frac{K_\mu(x)}{K_{\mu-1}(x)}<\frac{\mu-\frac{1}{2}+\sqrt{(\mu-\frac{1}{2})^2+x^2}}{x}<1+\frac{2\mu-1}{x}, \quad \mu>\tfrac{1}{2}.
\end{equation*}
Using this inequality and the upper bound of (\ref{bdsjbc}) we obtain, for $x>0$,
\begin{align*}xK_{\nu+2}(x)I_\nu(x)=\frac{K_{\nu+2}(x)}{K_{\nu+1}(x)}\cdot xK_{\nu+1}(x)I_\nu(x)<1+\frac{2\nu+3}{x},
\end{align*}
as required.
\end{proof}

With inequalities (\ref{bdsjbc1})--(\ref{bdsjbc2}) and the upper bound (\ref{ineqb1}) at hand, we are now in a position to prove the following theorem.

\begin{theorem}\label{notfin} Suppose that $-1<\beta<0$ and $\nu\geq\frac{1}{2}$. Then, for $x\geq0$,
\begin{equation}\label{term000}\frac{\mathrm{e}^{-\beta x}K_{\nu+2}(x)}{x^{\nu-1}}\int_0^x \mathrm{e}^{\beta t}t^{\nu}I_\nu(t)\,\mathrm{d}t<\frac{2(\nu+1)}{(2\nu+1)(1+\beta)}.
\end{equation}
Suppose now that $\nu>-\frac{1}{2}$.  Then
\begin{align}\label{term00}\mathrm{max}\bigg\{\frac{1}{2(1+\beta)},\frac{2(\nu+1)}{2\nu+1}\bigg\}&\leq\sup_{x\geq0}\bigg\{\frac{\mathrm{e}^{-\beta x}K_{\nu+2}(x)}{x^{\nu-1}}\int_0^x \mathrm{e}^{\beta t}t^{\nu}I_\nu(t)\,\mathrm{d}t\bigg\}<\frac{2\nu+7}{(2\nu+1)(1+\beta)}, 
\end{align}
and
\begin{align}
\label{term10}\frac{1}{2(1+\beta)}&\leq\sup_{x\geq0}\bigg\{\frac{\mathrm{e}^{-\beta x}K_{\nu+1}(x)}{x^{\nu-1}}\int_0^x \mathrm{e}^{\beta t}t^{\nu}I_\nu(t)\,\mathrm{d}t\bigg\}<\frac{2\nu+7}{2(2\nu+1)(1+\beta)}, \\
\label{term20}\frac{1}{2(1+\beta)}&\leq\sup_{x\geq0}\bigg\{\frac{\mathrm{e}^{-\beta x}K_{\nu}(x)}{x^{\nu-1}}\int_0^x \mathrm{e}^{\beta t}t^{\nu}I_\nu(t)\,\mathrm{d}t\bigg\}< \frac{2\nu+7}{2(2\nu+1)(1+\beta)}.
\end{align}
\end{theorem}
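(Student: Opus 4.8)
The plan is to pass everywhere to the variable $\gamma=-\beta\in(0,1)$, so that $1+\beta=1-\gamma$ and the integral occurring in each displayed expression is precisely $\int_0^x\mathrm{e}^{-\gamma t}t^\nu I_\nu(t)\,\mathrm{d}t$, to which the bounds of Section~\ref{sec2} and Remark~\ref{rem1} apply. For (\ref{term000}) (where $\nu\geq\tfrac12$) I would simply multiply inequality (\ref{gau1}) through by $\mathrm{e}^{\gamma x}K_{\nu+2}(x)/x^{\nu-1}>0$; the right-hand side collapses to $\frac{2(\nu+1)}{(2\nu+1)(1-\gamma)}\,xK_{\nu+2}(x)I_{\nu+1}(x)$, and the upper bound in (\ref{bdsjbc}) applied with order $\nu+1$ gives $xK_{\nu+2}(x)I_{\nu+1}(x)\leq1$. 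This yields (\ref{term000}) at once.

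For the three supremum bounds I would exploit the function $v$ from the proof of Theorem~\ref{tiger1} by writing, for $j\in\{0,1,2\}$,
\[\frac{\mathrm{e}^{\gamma x}K_{\nu+j}(x)}{x^{\nu-1}}\int_0^x\mathrm{e}^{-\gamma t}t^\nu I_\nu(t)\,\mathrm{d}t=xK_{\nu+j}(x)I_{\nu+1}(x)\cdot v(x).\]
The Bessel product is bounded by $1$ when $j=2$ (upper bound of (\ref{bdsjbc})) and by $\tfrac12$ when $j\in\{0,1\}$: for $j=1$ this is (\ref{bdsjbc1}) with order $\nu+1>\tfrac12$, and for $j=0$ one first replaces $K_\nu$ by $K_{\nu+1}$ using the monotonicity in the order (inequality (\ref{cake}), together with $K_\nu=K_{-\nu}$) and then applies (\ref{bdsjbc1}). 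Thus everything reduces to a uniform bound on $v(x)$.

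To bound $v$ I would split on $\nu$. For $\nu\geq\tfrac12$, inequality (\ref{gau1}) rearranges to $v(x)<\frac{2(\nu+1)}{(2\nu+1)(1-\gamma)}$ for all $x>0$; feeding this into the three identities and using $2(\nu+1)<2\nu+7$ settles the $\nu\geq\tfrac12$ cases of (\ref{term00})--(\ref{term20}) simultaneously (and (\ref{term00}) here is also immediate from (\ref{term000})). For $-\tfrac12<\nu<\tfrac12$ I would instead invoke (\ref{ineqb1}): the proof of Theorem~\ref{tiger1} already shows $v(x)<\frac{2(\nu+1+x)}{2\nu+1}$ for all $x>0$, which is increasing and hence $\leq M_{\nu,\gamma}(x_*)$ on $(0,x_*]$, while dividing (\ref{ineqb1}) by $\mathrm{e}^{-\gamma x}x^\nu I_{\nu+1}(x)$ gives $v(x)<M_{\nu,\gamma}(x_*)$ on $[x_*,\infty)$. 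Hence $\sup_{x>0}v(x)\leq M_{\nu,\gamma}(x_*)$, so the expression for index $j$ is bounded by $c_jM_{\nu,\gamma}(x_*)$ with $c_2=1$, $c_1=c_0=\tfrac12$. The crux is then to choose $x_*>\frac{1}{1-\gamma}$ making $M_{\nu,\gamma}(x_*)<\frac{2\nu+7}{(2\nu+1)(1-\gamma)}$: since the two terms of (\ref{mng}) are respectively increasing and decreasing in $x_*$, I would take $x_*$ just above $\frac{2\nu+7}{6(1-\gamma)}$ (the value equating the second term to the target), and a short computation shows the first term then stays strictly below the target precisely because $\nu<4$. This gives the strict inequalities in (\ref{term00}), (\ref{term10}) and (\ref{term20}).

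Finally, the lower bounds I would read off from the endpoints. Combining (\ref{eqeq1}) with $K_{\nu+j}(x)\sim\sqrt{\pi/(2x)}\,\mathrm{e}^{-x}$ gives $\lim_{x\to\infty}xK_{\nu+j}(x)I_{\nu+1}(x)v(x)=\frac{1}{2(1-\gamma)}$ for every $j$, so each supremum is at least $\frac{1}{2(1+\beta)}$; and the small-argument forms (\ref{Itend0}) give $xK_{\nu+2}(x)I_{\nu+1}(x)\to1$ and $v(x)\to\frac{2(\nu+1)}{2\nu+1}$ as $x\downarrow0$, supplying the second term in the maximum in (\ref{term00}). The main obstacle is the regime $-\tfrac12<\nu<\tfrac12$: because (\ref{ineqb1}) is only valid on $[x_*,\infty)$, one must both patch the interval $(0,x_*]$ and tune $x_*$ so that the constant $M_{\nu,\gamma}(x_*)$ falls below $\frac{2\nu+7}{(2\nu+1)(1-\gamma)}$, and it is exactly in verifying the existence of such a feasible $x_*$ (yielding a strict inequality) that the restriction to small $\nu$ enters.
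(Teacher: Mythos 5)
Your proposal is correct, and its skeleton matches the paper's: inequality (\ref{term000}) from (\ref{gau1}) plus the upper bound in (\ref{bdsjbc}); the lower bounds from the $x\rightarrow\infty$ and $x\downarrow0$ asymptotics; the case $\nu\geq\tfrac{1}{2}$ from (\ref{gau1}); and the case $-\tfrac{1}{2}<\nu<\tfrac{1}{2}$ by splitting at a threshold $x_*$, applying (\ref{ineqb1}) on $[x_*,\infty)$ and monotonicity in $\gamma$ together with (\ref{intfor}) on $(0,x_*]$. Two implementation points genuinely differ, and each buys you something. First, by factoring each expression as $xK_{\nu+j}(x)I_{\nu+1}(x)\cdot v(x)$ and observing that the argument in the proof of Theorem \ref{tiger1} in fact gives $v(x)<\frac{2(\nu+1+x)}{2\nu+1}$ for \emph{all} $x>0$ (via the ratio bound of \cite{nasell2}), you only ever need the product bounds $xK_{\nu+2}(x)I_{\nu+1}(x)\leq1$ and $xK_{\nu+1}(x)I_{\nu+1}(x)<\tfrac{1}{2}$ from (\ref{bdsjbc}) and (\ref{bdsjbc1}), together with (\ref{cake}) for the $K_\nu$ case. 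The paper instead bounds $x^2K_{\nu+j}(x)\big(I_\nu(x)+I_{\nu+1}(x)\big)$ on the small-$x$ interval, and for the $K_{\nu+2}$ case this forces it to invoke (\ref{bdsjbc2}), i.e.\ the Lemma proved just before Theorem \ref{notfin}; your route makes that lemma unnecessary (indeed your implicit bound $xK_{\nu+2}(x)I_\nu(x)<1+\frac{2\nu+2}{x}$ is marginally sharper than (\ref{bdsjbc2})). Second, the paper fixes $x_*=\frac{2}{1+\beta}$ and then checks that the resulting maximum of constants sits below $\frac{2\nu+7}{(2\nu+1)(1+\beta)}$, whereas you tune $x_*$ just above $\frac{2\nu+7}{6(1-\gamma)}$, the root of $\frac{x_*}{(1-\gamma)x_*-1}=\frac{2\nu+7}{(2\nu+1)(1-\gamma)}$, and verify that the other term in (\ref{mng}) stays strictly below the target; your feasibility computation (which reduces to $3(\nu+1)(1-\gamma)<2\nu+7$, automatic in the regime $-\tfrac{1}{2}<\nu<\tfrac{1}{2}$ since $\gamma>0$) is sound, and you rightly take care that the supremum bounds come out strict. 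Both choices of $x_*$ yield the same final constants; the paper's is more explicit, while yours explains where the constant $2\nu+7$ comes from and shows the same argument would work for any $\nu\leq4$.
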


\begin{proof} (i) By the integral inequality (\ref{gau1}), we have that
\begin{align*}\frac{\mathrm{e}^{-\beta x}K_{\nu+2}(x)}{x^{\nu-1}}\int_0^x \mathrm{e}^{\beta t}t^{\nu}I_\nu(t)\,\mathrm{d}t&<\frac{\mathrm{e}^{-\beta x}K_{\nu+2}(x)}{x^{\nu-1}}\cdot\frac{2(\nu+1)}{(2\nu+1)(1+\beta)}e^{\beta x}x^\nu I_{\nu+1}(x)\\
&=\frac{2(\nu+1)}{(2\nu+1)(1+\beta)}x K_{\nu+2}(x) I_{\nu+1}(x)\leq\frac{2(\nu+1)}{(2\nu+1)(1+\beta)},
\end{align*}
where we used the upper bound in inequality (\ref{bdsjbc}) in the final step.

\vspace{2mm}

\noindent{(ii)} We now prove the lower bounds in (\ref{term00})--(\ref{term20}).  Recall from (\ref{eqeq1}) that, for $-1<\beta<0$ and $\nu>-\frac{1}{2}$,
\begin{equation*}\int_0^x \mathrm{e}^{\beta t}t^{\nu}I_\nu(t)\,\mathrm{d}t\sim\frac{1}{\sqrt{2\pi}(1+\beta)}x^{\nu-1/2}\mathrm{e}^{(1+\beta)x}, \quad x\rightarrow\infty.
\end{equation*}
Combining this limiting form with the limiting form (\ref{Ktendinfinity}) then gives that, as $x\rightarrow\infty$,
\begin{align*}\frac{\mathrm{e}^{-\beta x}K_{\nu+n}(x)}{x^{\nu-1}}\int_0^x \mathrm{e}^{\beta t}t^{\nu}I_\nu(t)\,\mathrm{d}t&\sim \sqrt{\frac{\pi}{2}}x^{1/2-\nu}\mathrm{e}^{-(1+\beta)x}\cdot\frac{1}{\sqrt{2\pi}(1+\beta)}x^{\nu-1/2}\mathrm{e}^{(1+\beta)x}\\
&=\frac{1}{2(1+\beta)},
\end{align*}
where $n\in\mathbb{R}$.  This gives us the lower bounds in (\ref{term10}) and (\ref{term20}), and one of two possible lower bounds in (\ref{term00}). To obtain one of the other possible lower bounds in (\ref{term00}), we examine the behaviour as $x\downarrow0$. Using the limiting forms (\ref{Itend0}) and (\ref{Ktend0}), we have that, as $x\downarrow0$,
\begin{align*}\frac{\mathrm{e}^{-\beta x}K_{\nu+2}(x)}{x^{\nu-1}}\int_0^x \mathrm{e}^{\beta t}t^{\nu}I_\nu(t)\,\mathrm{d}t\sim\frac{ 2^{\nu+1}\Gamma(\nu+2)}{x^{2\nu+1}}\int_0^x\frac{t^{2\nu}}{\Gamma(\nu+1)2^\nu}\,\mathrm{d}t=\frac{2(\nu+1)}{2\nu+1}.
\end{align*} 
On the other hand, the expressions in (\ref{term10}) and (\ref{term20}) can be seen from (\ref{Ktend0}) to be $o(1)$ as $x\downarrow0$.  


\vspace{2mm}

\noindent{(iii)} We now prove the upper bound in (\ref{term00}). We already have an upper bound that is valid for $\nu\geq\frac{1}{2}$ in (\ref{term000}), so we can restrict our attention to the case $-\frac{1}{2}<\nu<\frac{1}{2}$. We obtain our bound by bounding the expression
\[\frac{\mathrm{e}^{-\beta x}K_{\nu+2}(x)}{x^{\nu-1}}\int_0^x \mathrm{e}^{\beta t}t^{\nu}I_\nu(t)\,\mathrm{d}t\]
for $x\in[0,x_*)$ and $x\in[x_*,\infty)$, where $x_*=\frac{2}{1+\beta}$ (note that $x_*>\frac{1}{1+\beta}$). Let us first obtain a bound for $x\in[0,x_*)$. Note that
\begin{equation*}\frac{\partial}{\partial \beta}\bigg(\frac{\mathrm{e}^{-\beta x}K_{\nu+2}(x)}{x^{\nu-1}}\int_0^x \mathrm{e}^{\beta t}t^{\nu}I_\nu(t)\,\mathrm{d}t\bigg)=\frac{\mathrm{e}^{-\beta x}K_{\nu+2}(x)}{x^{\nu-1}}\int_0^x (t-x)\mathrm{e}^{\beta t}t^{\nu}I_\nu(t)\,\mathrm{d}t< 0.
\end{equation*}
As $-1<\beta<0$, we therefore have that, for $0\leq x<x_*$,
\begin{align*}\frac{\mathrm{e}^{-\beta x}K_{\nu+2}(x)}{x^{\nu-1}}\int_0^x \mathrm{e}^{\beta t}t^{\nu}I_\nu(t)\,\mathrm{d}t&< \frac{\mathrm{e}^{ x}K_{\nu+2}(x)}{x^{\nu-1}}\int_0^x \mathrm{e}^{- t}t^{\nu}I_\nu(t)\,\mathrm{d}t\\
&=\frac{1}{2\nu+1}x^2K_{\nu+2}(x)\big(I_\nu(x)+I_{\nu+1}(x)\big)\\
&\leq\frac{2x_*+2\nu+3}{2\nu+1}=\frac{1}{2\nu+1}\bigg(2\nu+3+\frac{4}{1+\beta}\bigg),
\end{align*}
where we used (\ref{intfor}) to evaluate the integral and the upper bounds in inequalities (\ref{bdsjbc}) and (\ref{bdsjbc2}) to obtain the second inequality.

Suppose now that $x\geq x_*$. Let $M_{\nu,-\beta}(x_*)$ be defined as in (\ref{mng}) (with $\gamma=-\beta$). Then, by inequality (\ref{ineqb1}),
\begin{align*}\frac{\mathrm{e}^{-\beta x}K_{\nu+2}(x)}{x^{\nu-1}}\int_0^x \mathrm{e}^{\beta t}t^{\nu}I_\nu(t)\,\mathrm{d}t&<\frac{\mathrm{e}^{-\beta x}K_{\nu+2}(x)}{x^{\nu-1}}\cdot M_{\nu,-\beta}(x_*)\mathrm{e}^{\beta x}x^\nu I_{\nu+1}(x) \\
&=M_{\nu,-\beta}(x_*)xK_{\nu+2}(x)I_{\nu+1}(x)\\
&\leq M_{\nu,-\beta}(x_*)\\
&=\max\bigg\{\frac{2}{2\nu+1}\bigg(\nu+1+\frac{2}{1+\beta}\bigg),\frac{2}{1+\beta}\bigg\}\\
&=\frac{2}{2\nu+1}\bigg(\nu+1+\frac{2}{1+\beta}\bigg),
\end{align*}
where we used inequality (\ref{bdsjbc}) to obtain the second inequality, and that $-\frac{1}{2}<\nu<\frac{1}{2}$ in the last step. Combining our bounds, we have that, for $x\geq0$ and $-\frac{1}{2}<\nu<\frac{1}{2}$,
\begin{align}&\frac{\mathrm{e}^{-\beta x}K_{\nu+2}(x)}{x^{\nu-1}}\int_0^x \mathrm{e}^{\beta t}t^{\nu}I_\nu(t)\,\mathrm{d}t\nonumber\\
&\quad<\mathrm{max}\bigg\{\frac{1}{2\nu+1}\bigg(2\nu+3+\frac{4}{1+\beta}\bigg),\frac{2}{2\nu+1}\bigg(\nu+1+\frac{2}{1+\beta}\bigg)\bigg\}\nonumber\\
\label{pokl}&\quad=\frac{1}{2\nu+1}\bigg(2\nu+3+\frac{4}{1+\beta}\bigg)<\frac{2\nu+7}{(2\nu+1)(1+\beta)}.
\end{align}
Combining inequalities (\ref{term000}) and (\ref{pokl}) (and noting that the upper bound in (\ref{pokl}) is greater than the upper bound in (\ref{term000})) yields inequality (\ref{term00}).

\vspace{2mm}

\noindent{(iv)} The proof of the upper bound in (\ref{term20}) is similar to the proof of the upper bound in (\ref{term10}). Let $x_*=\frac{2}{1+\beta}$. Recall that we already have an upper bound for the case $\nu\geq\frac{1}{2}$ in inequality (\ref{term1}). We therefore also restrict our attention here to the case $-\frac{1}{2}<\nu<\frac{1}{2}$. Arguing similarly to before, we have that, for $0\leq x< x_*$,
\begin{align}\frac{\mathrm{e}^{-\beta x}K_{\nu+1}(x)}{x^{\nu-1}}\int_0^x \mathrm{e}^{\beta t}t^{\nu}I_\nu(t)\,\mathrm{d}t&<\frac{1}{2\nu+1}x^2K_{\nu+1}(x)\big(I_\nu(x)+I_{\nu+1}(x)\big)\nonumber\\
\label{lab1}&<\frac{x_*}{2\nu+1}\bigg(1+\frac{1}{2}\bigg)=\frac{3}{(2\nu+1)(1+\beta)},
\end{align}
where we used inequalities (\ref{bdsjbc1}) and (\ref{bdsjbc}) to obtain the second inequality.
Suppose now that $x\geq x_*$. Then, by inequality (\ref{ineqb1}),
\begin{align}\frac{\mathrm{e}^{-\beta x}K_{\nu+1}(x)}{x^{\nu-1}}\int_0^x \mathrm{e}^{\beta t}t^{\nu}I_\nu(t)\,\mathrm{d}t&<M_{\nu,-\beta}(x_*)xK_{\nu+1}(x)I_{\nu+1}(x)< \frac{1}{2}M_{\nu,-\beta}(x_*)\nonumber\\
\label{lab2}&=\frac{1}{2\nu+1}\bigg(\nu+1+\frac{2}{1+\beta}\bigg)<\frac{\nu+3}{(2\nu+1)(1+\beta)},
\end{align}
where we used inequality (\ref{bdsjbc1}) to obtain the second inequality, and, as in the proof of the upper bound in (\ref{term00}), we used that $-\frac{1}{2}<\nu<\frac{1}{2}$.
Finally, we note that the upper bounds (\ref{term1}), (\ref{lab1}) and (\ref{lab2}) are all bounded above by $\frac{2\nu+7}{2(2\nu+1)(1+\beta)}$ for $\nu>-\frac{1}{2}$, and this gives us our upper bound in (\ref{term10}).

\vspace{2mm}

\noindent{(v)} We obtain the upper bound in (\ref{term20}) as a direct consequence of the upper bound in (\ref{term10}) from an application of inequality (\ref{cake}).
\end{proof}

\begin{remark}Examining the proof of the upper bound in (\ref{term00}), we see that we could improve the bound by choosing $x_*>\frac{1}{1+\beta}$ to be such that
\begin{equation*}\frac{2x_*+2\nu+3}{2\nu+1}=\frac{x_*}{(1+\beta)x_*-1}.
\end{equation*}
This equation reduces to a quadratic equation for $x_*$, for which the solution takes a more complicated form than our choice of $x_*=\frac{2}{1+\beta}$, leading to more a complex upper bound than (\ref{term00}). For given values of $\nu$ and $\gamma$, it would also be possible to optimise the choice of $x_*$ in the derivation of the upper bound in (\ref{term10}). However, our choice of $x_*=\frac{2}{1+\beta}$ has the advantage of allowing us to obtain simple upper bounds that hold for all $\nu>-\frac{1}{2}$ and $-1<\beta<0$.
\end{remark}

We end this section by using some of the upper bounds of Theorem \ref{notfin} to give a short proof of Theorem \ref{tiger2}.

\vspace{3mm}

\noindent{\emph{Proof of Theorem \ref{tiger2}.}} (i) From the upper bound in (\ref{term00}) (with $\beta=-\gamma$) we obtain the following inequality: for $x>0$, $\nu>-\frac{1}{2}$, $0<\gamma<1$,
\begin{align*}\int_0^x\mathrm{e}^{-\gamma t}t^\nu I_\nu(t)\,\mathrm{d}t<\frac{2\nu+7}{(2\nu+1)(1+\beta)}\frac{\mathrm{e}^{-\gamma x}x^{\nu-1}}{K_{\nu+2}(x)},
\end{align*}
and using the inequality $\frac{1}{K_{\nu+2}(x)}<2xI_{\nu+1}(x)$, which is a rearrangement of the lower bound in (\ref{bdsjbc}), yields inequality (\ref{ineqb10}).

\vspace{2mm}

\noindent{(ii) The proof is the same as part (i), but we use the upper bound in (\ref{term10}), rather than the upper bound in (\ref{term00}), and then apply the inequality $\frac{1}{K_{\nu+1}(x)}<2xI_{\nu}(x)$.

\vspace{2mm}

\noindent{(iii) The proof is very similar to the proof of that of inequality (\ref{ineqb3}), with the only difference being that we use inequality (\ref{ineqb10}) to bound the integral on the right-hand side of (\ref{1stint0}), rather than inequality (\ref{gaunt219}).  In addition to giving the alternative bound (\ref{ineqb12}), this extends the range of validity of the bound to $\nu>\frac{1}{2}$. That inequality (\ref{ineqb12}) is tight as $x\rightarrow\infty$ can be proved by the same argument as the one used in part (vi) of the proof of Theorem \ref{tiger1}.
\hfill $\square$

\appendix

\section{Elementary properties of modified Bessel functions}\label{appa}

Here we list standard properties of modified Bessel functions that are used throughout this paper.  All formulas can be found in \cite{olver}, except for the inequalities.

The modified Bessel functions of the first kind $I_\nu(x)$ and second kind $K_\nu(x)$ are defined, for $\nu\in\mathbb{R}$ and $x>0$, by
\begin{equation}\label{Idefn}I_{\nu} (x) =  \sum_{k=0}^{\infty} \frac{(\frac{1}{2}x)^{\nu+2k}}{\Gamma(\nu +k+1) k!},
\end{equation}
and
\begin{equation*} K_\nu(x)=\int_0^\infty \mathrm{e}^{-x\cosh(t)}\cosh(\nu t)\,\mathrm{d}t.
\end{equation*}
The modified Bessel functions $I_{\nu}(x)$ and $K_{\nu}(x)$ are both regular functions of $x\in\mathbb{R}$.  For $x>0$, the functions $I_{\nu}(x)$ and $K_{\nu}(x)$ are positive for $\nu\geq-1$ and all $\nu\in\mathbb{R}$, respectively.  The modified Bessel function of the first kind $I_\nu(x)$ satisfies the following identities and differentiation formulas:
\begin{align}I_{-n}(x)&=I_n(x), \quad n\in\mathbb{Z},\nonumber \\
\label{Iidentity}I_{\nu +1} (x) &= I_{\nu -1} (x) - \frac{2\nu}{x} I_{\nu} (x), \\
\label{diffone}\frac{\mathrm{d}}{\mathrm{d}x} (x^{\nu} I_{\nu} (x) ) &= x^{\nu} I_{\nu -1} (x),\\
\label{diff0}\frac{\mathrm{d}}{\mathrm{d}x} ( I_{0} (x) ) &=  I_{1} (x),
\end{align}
and the integration formula
\begin{equation}\label{intfor}\int_0^x \mathrm{e}^{-t}t^\nu I_\nu(t)\,\mathrm{d}t=\frac{\mathrm{e}^{-x}x^{\nu+1}}{2\nu+1}\big(I_\nu(x)+I_{\nu+1}(x)\big), \quad x>0,\; \nu>-\tfrac{1}{2}.
\end{equation}
The modified Bessel functions have the following asymptotic behaviour:
\begin{align}\label{Itend0}I_{\nu} (x) &\sim \frac{(\frac{1}{2}x)^\nu}{\Gamma(\nu +1)}\bigg(1+\frac{x^2}{4(\nu+1)}\bigg), \quad x \downarrow 0, \: \nu\notin\{-1,-2,-3,\ldots\}, \\
\label{Itendinfinity}I_{\nu} (x) &\sim \frac{\mathrm{e}^{x}}{\sqrt{2\pi x}}, \quad x \rightarrow\infty, \: \nu\in\mathbb{R}, \\
\label{Ktend0}K_{\nu} (x) &\sim \begin{cases} 2^{|\nu| -1} \Gamma (|\nu|) x^{-|\nu|}, & \quad x \downarrow 0, \: \nu \not= 0, \\
-\log x, & \quad x \downarrow 0, \: \nu = 0, \end{cases} \\
\label{Ktendinfinity} K_{\nu} (x) &\sim \sqrt{\frac{\pi}{2x}} \mathrm{e}^{-x}, \quad x \rightarrow \infty, \: \nu\in\mathbb{R}.
\end{align}
Let $x > 0$. Then the following inequalities hold:
\begin{align}\label{Imon}I_{\nu+1} (x) &< I_{\nu} (x), \quad \nu \geq -\tfrac{1}{2}, \\
\label{cake}K_{\nu+1} (x) &> K_{\nu} (x), \quad \nu > -\tfrac{1}{2}.
\end{align}
Inequality (\ref{cake}) is given in \cite{ifant}. Inequality (\ref{Imon}) can be found in \cite{jones} and \cite{nasell}, which extends a result of \cite{soni}.  A survey of related inequalities for modified Bessel functions are given by  \cite{baricz24}, and refinements of inequalities (\ref{Imon}) and (\ref{cake}) are given in \cite{segura} and references therein.

\subsection*{Acknowledgements}
The author is supported by a Dame Kathleen Ollerenshaw Research Fellowship.  

\footnotesize

\end{document}